\newcommand{\innprd}[2]{\left< #1 , #2 \right>}
\newcommand{\abs}[1]{\lvert#1\rvert}
\def\op#1{{\mathcal #1}}
\newcommand{\RR}{\mathbb{R}}
\newcommand{\A}{\mathcal{A}}
\newcommand{\Lo}{\mathcal{L}}
\newcommand{\T}{\mathscr{T}}
\newcommand{\U}{\mathcal{U}}
\newcommand{\p}{\mathcal{P}}
\newcommand{\uu}{\vec{u}}
\newcommand{\xb}{\mathbf{x}}
\newcommand{\ub}{\mathbf{u}}
\newcommand{\fb}{\mathbf{f}}
\newcommand{\pb}{\mathbf{p}}
\newcommand{\myphi}{\vec{\varphi}}
\newcommand{\di}{\mathrm{div}\,}
\newcommand{\e}{\vec{e}}
\newcommand{\ff}{\vec{f}}
\newcommand{\gv}{\vec{g}}
\newcommand{\bnorm}[1]{|\kern-0.12em|\kern-0.12em|#1|\kern-0.12em|\kern-0.12em|}
\newcommand{\R}{\mathbb{R}}
\newcommand{\junk}[1]{{}}
\newtheorem{remark}[theorem]{Remark}
\title{Multigrid solution of a distributed optimal control problem constrained 
       by the {S}tokes equations}
\author{Andrei Dr{\u{a}}g{\u{a}}nescu\thanks{Department 
    of Mathematics and Statistics, University of Maryland, Baltimore
    County, 1000~Hilltop Circle, Baltimore, Maryland 21250 ({\tt draga@umbc.edu}). The work of this author was supported 
    in part by the Department of Energy under contract no. DE-SC0005455, and by the National Science Foundation under award
    DMS-1016177.}
  \and{Ana Maria Soane\thanks{Department 
    of Mathematics and Statistics, University of Maryland, Baltimore
    County, 1000~Hilltop Circle, Baltimore, Maryland 21250 ({\tt asoane@umbc.edu}). The work of this author was supported 
    in part by the Department of Energy under contract no. DE-SC0005455.}}}
\begin{document}

\maketitle

\begin{abstract} In this work we construct multigrid preconditioners to accelerate the solution process of a 
linear-quadratic optimal control problem constrained by the Stokes system. 
The first order optimality conditions of the control problem form  a linear system (the KKT system)
connecting the state, adjoint, and control variables. Our approach is to 
eliminate the state and adjoint variables by essentially solving two Stokes systems, 
and to construct efficient multigrid preconditioners for the 
Schur-complement of the block associated with the state and adjoint variables.
These multigrid preconditioners are shown to be of optimal order
with respect to the convergence properties of the discrete methods used to
solve the Stokes system. In particular, the number of conjugate gradient iterations
is shown to decrease as the resolution increases, a feature shared by similar
multigrid preconditioners for elliptic constrained optimal control problems.
\end{abstract}

\begin{keywords} multigrid methods, PDE-constrained optimization, Stokes equations, finite elements 
\end{keywords}

\begin{AMS} 65K10, 65N21, 65N55, 90C06 
\end{AMS}

\pagestyle{myheadings}
\thispagestyle{plain}
\markboth{A.~DR{\u A}G{\u A}NESCU AND A.~SOANE}{MULTIGRID FOR STOKES-CONSTRAINED OPTIMIZATION}

\section{Introduction}
\label{sec:intro}

Over the last decade, the computational  community has shown  a growing 
interest in devising fast solution methods for large-scale distributed optimal 
control problems constrained by  partial differential equations (PDEs). 
Optimal control problems constrained by the Stokes system form a  stepping 
stone in the  natural progression from the -- now classical -- 
Poisson-constrained test problem to  problems constrained by more specialized 
and complex  PDE systems  modeling fluid flow such as Navier-Stokes,
non-Newtonian Stokes, or the shallow water equations. Optimal control problems 
constrained by such  PDE models play important roles in real-world 
applications, such as modeling of ice sheets or data assimilation for ocean 
flows and weather models. 

In this article we consider an optimal control problem with a cost functional
of tracking-type:
\begin{align}\label{equ:cost}
 \min_{\uu, p, \ff}\  J(\uu,p,\ff) = 
       \frac{\gamma_u}{2}\|\uu - \uu_d\|^2_{L_2(\Omega)^2} +
       \frac{\gamma_p}{2}\|p -p_d\|^2_{L_2(\Omega)} + 
       \frac{\beta}{2}\|\ff\|^2_{L_2(\Omega)^2}\ ,
\end{align}
subject to the constraints
\begin{equation}\label{equ:stokes}
\left\{\begin{aligned}
 -\Delta \uu + \nabla p &= \ff \quad \text{in } \Omega, \\
                \di \uu &= 0   \quad \text{in } \Omega, \\
                    \uu &= 0   \quad \text{on } \partial \Omega,
\end{aligned}\right .
\end{equation}
where $\Omega$ is a  bounded polygonal domain in $\R^2$. The purpose of the
control problem is to identify a force $\ff$ that gives rise to a velocity 
$\uu$ and/or pressure~$p$ to match a known target velocity $\uu_d$, 
respectively pressure $p_d$.
Since this problem is ill-posed, we consider a standard Tikhonov 
regularization for the force with the regularization parameter $\beta$ being a fixed positive number. The constants $\gamma_u$, $\gamma_p$ are nonnegative, 
not both zero.
   
The main goal of this article is to construct and analyze optimal order
multigrid preconditioners to be used in the solution process 
of~\eqref{equ:cost}-\eqref{equ:stokes}. Over the last few  years a 
significant amount of work has been devoted to developing multigrid methods 
for optimal control problems. An overview of this research and further 
references can be found in a review article by Borzi and Schulz~\cite{BS}. 
However, fewer works are dedicated specifically to  optimal control problems 
constrained by the Stokes system.
For example, recently,  Rees and Wathen~\cite{RW} have proposed two 
preconditioners for the optimality system in the distributed control of the 
Stokes system, a block-diagonal preconditioner for MINRES and a 
block-lower triangular preconditioner for a nonstandard conjugate 
gradient method. 
We note  that there are several papers in the literature on finite element 
error analysis for the optimal control of the Stokes equations 
(see, e.g., \cite{NS,RV,RMV} and the references therein).  
Our paper focuses on the solution of the linear system that arises in the 
discretization process, which is not addressed in these papers.
However, for completeness, we also prove an a priori error estimate for the 
optimal control since the cost functional in \eqref{equ:cost} 
includes a pressure term, 
using standard techniques similar to those used in \cite{NS,RV,DD}.

Since the cost functional in~\eqref{equ:cost} is quadratic, the KKT system is 
a linear saddle-point problem connecting the state, adjoint, and control 
variables. Solution methods for these problems typically fall into two 
categories: the all-at-once approach takes advantage of the sparsity of the
system, but has the disadvantage that the matrix  is indefinite. 
On the other hand, Schur-complement strategies may lead to smaller systems that may also be positive definite, but the sparsity is lost. Our approach is to 
eliminate the state and adjoint variables by essentially solving two Stokes 
systems using specific methods (see~\cite{MR2155549}), and to construct 
efficient multigrid preconditioners for the Schur-complement of the block 
associated to the state and adjoint variables.
The constructed multigrid preconditioners are related to the ones developed by
Dr{\u a}g{\u a}nescu and Dupont~\cite{DD}, and  are shown to be of optimal 
order with respect to the convergence properties of the discrete finite 
element methods used to solve the Stokes system. In particular, the number of 
conjugate gradient iterations is shown to decrease as the resolution increases, a feature shared by similar multigrid preconditioners for elliptic-constrained 
optimal control problems. One word on the optimality of the preconditioner: 
the usual notion of optimality, especially in the context of multigrid, refers 
to the cost of the solution process being proportional to the number of 
variables. We argue that for this problem, an unpreconditioned application of 
conjugate gradient (CG) in conjunction with an optimal multigrid solve for the 
Stokes system already satisfies  this notion of optimality. In the current 
context, multigrid preconditioners actually can perform better than that, and 
optimality refers to the order of approximation of the operator under 
scrutiny -- in this case the reduced Hessian -- by the multigrid 
preconditioner, as shown in Theorem~\ref{thm:prec}.

The paper is organized as follows: 
In Section \ref{sec:conv},  we introduce the discrete optimal control problem  
and prove finite element estimates that will be needed for the multigrid 
analysis. Section \ref{sec:prec} contains the main result of the paper, 
Theorem~\ref{thm:prec}, which refers to the analysis of the two-grid 
preconditioner; furthermore, the extension to multigrid preconditioners is 
briefly discussed. In Section~\ref{sec:num}  we present numerical experiments 
that illustrate our theoretical results, and we formulate some conclusions in
Section~\ref{sec:concl}.

\section{Discretization and convergence results} \label{sec:conv}
The strategy we adopt is to first discretize the optimal control problem then
optimize the discrete problem. To define a discrete problem based on 
a finite element approximation of the Stokes system we briefly recall the usual weak formulation of the  Stokes equations. Define the spaces
\begin{align*}
 V &= H_0^1(\Omega)^2, \\
 M &= L_{2,0}(\Omega) = \bigg\{ p \in L_2(\Omega): \int_{\Omega} p = 0\bigg\}, 
\end{align*}
and the bilinear forms
$a: V \times V \rightarrow \R$ and $b:V \times M \rightarrow \R$ as
\begin{align*}
  a(\uu, \myphi) &= \sum_{i=1}^2 \int_{\Omega}
  \nabla \uu_i \cdot \nabla \myphi_i, \\
  b(\myphi, p)   &= -\int_{\Omega} p \, \di \myphi.
\end{align*}
Throughout this paper, we write $(\cdot,\cdot)$ for the inner product in 
$L_2(\Omega)$ or~$L_2(\Omega)^2$ according to context, and similarly   
for the norms, if there is no risk of misunderstanding. 

The weak solution $(\uu,p) \in V \times M$ of \eqref{equ:stokes} is the 
solution of
\begin{align*}
 a(\uu, \myphi) + b(\myphi,p) &= (\ff, \myphi)  &\forall \myphi \in V, \\ 
 b(\uu, \psi)                 &= 0   &\forall \psi \in M.
\end{align*} 
For $\ff \in H^{-1}(\Omega)^2$ the problem has a unique solution \cite{GR}. 
Moreover, if $\Omega$ is a convex polygon and $\ff \in L_2(\Omega)^2$, then 
$\uu \in H^2(\Omega)^2$, $p \in H^1(\Omega)$ \cite{KeoS},  
and there exists $C=C(\Omega)>0$ such that 
\begin{align}\label{equ:regc}
 \|\uu\|_{H^2(\Omega)^2} + \|\nabla p\|_{L_2(\Omega)} 
 \leq C\|\ff\|_{L_2(\Omega)^2}.
\end{align}
Throughout this paper we will assume $\Omega$ to be convex, so that the 
$H^2$- regularity of the Stokes problem is ensured.
Furthermore, the target velocity field $\uu_d$ is assumed to be 
from $L_2(\Omega)^2$ and the target pressure $p_d$ from $M$.

We introduce the solution mappings $\,\U$ and $\p$ of the continuous state 
equation defined  such that for any $\ff \in L_2(\Omega)^2$ the following holds:
\begin{align*}
 a(\U\ff, \myphi) + b(\myphi, \p \ff)  = (\ff, \myphi) 
 \quad \text{and} \quad b(\U \ff,\psi) = 0 \quad 
 \forall (\myphi, \psi) \in V \times M. 
\end{align*}
The mapping $\U$, considered as a linear operator in $L_2(\Omega)^2$, is 
compact and self-adjoint, as 
\begin{align*}
 (\U \ff_1, \ff_2) = a(\U \ff_1, \U \ff_2) = (\ff_1, \U \ff_2) \quad 
 \forall \ff_1, \ff_2 \in L_2(\Omega)^2.
\end{align*}
We denote by $\p^*:L_{2,0}(\Omega) \rightarrow L_2(\Omega)^2$ the adjoint 
operator of $\p$, defined by  
\begin{align*}
 (\p^*q, \ff)_{L_2(\Omega)^2} = (q, \p \ff)_{L_2(\Omega)} \quad
 \forall q \in L_{2,0}(\Omega),  \ff \in L_2(\Omega)^2. 
\end{align*}
With this notation, the  problem~\eqref{equ:cost}-\eqref{equ:stokes} is written in reduced, unconstrained form as
\begin{align}\label{equ:costred}
 \min_{\ff\in L_2(\Omega)^2}\  \hat{J}(\ff) = 
 \frac{\gamma_u}{2}\|\U\ff - \uu_d\|^2_{L_2(\Omega)^2} +
 \frac{\gamma_p}{2}\|\p \ff -p_d\|^2_{L_2(\Omega)} + 
 \frac{\beta}{2}\|\ff\|^2_{L_2(\Omega)^2}\ .
\end{align}
The Hessian operator associated to the reduced cost functional $\hat{J}$ is 
given by
$$H_{\beta} = \gamma_u \, \U^*\U  + \gamma_p \p^* \p + \beta I\ .$$
Note that the solution of the minimization problem \eqref{equ:cost} is 
obtained as the solution of the normal equation  
\begin{align}\label{equ:sol_cont}
 H_{\beta} \ff = \gamma_u \,\U^* \uu_d + \gamma_p \p^* p_d.
\end{align}
The goal of this paper is to design an efficient multigrid algorithm for 
solving the discrete version of \eqref{equ:sol_cont}.

\subsection{Finite element approximation}
\label{ssec:feapprox}

We consider a shape regular  quasi-uniform quadrilateral mesh 
$\T_h$ of $\bar{\Omega}$, and we assume that the mesh $\T_h$  results from 
a coarser regular mesh $\T_{2h}$ from one uniform refinement. We use the 
Taylor-Hood $\mathbf{Q}_2-\mathbf{Q}_1$ finite elements 
to discretize the state equation. 
The velocity field $\uu$ is approximated in the space 
$V_h^0 = V_h \cap H_0^1(\Omega)^2$, where   
\begin{align*}
 V_h   &= \{v_h \in C(\bar{\Omega})^2: v_h|_T \in Q_2(T)^2 
          \text{ for } T \in \T_h\},
\end{align*}
and the pressure $p$ is approximated in the space
\begin{align*}
 M_h   &= \{q_h \in C(\Omega) \cap L_{2,0}(\Omega): q_h|_T \in Q_1(T) 
          \text{ for } T \in \T_h\},
\end{align*}
where $Q_k(T)$ is the space of polynomials of degree less than or equal 
to~$k$ in each variable~\cite{Ci}.  
The control variable $\ff$ is approximated by continuous piecewise 
biquadratic polynomial vectors in the space $V_h$.  
\begin{remark}
For convenience, we choose here the quadrilateral $\mathbf{Q}_2-\mathbf{Q}_1$ 
Taylor-Hood elements, however, our analysis can be extended to triangular  
$\mathbf{P}_2-\mathbf{P}_1$ elements as well as other stable mixed 
finite elements.
\end{remark}

For a given control $\ff \in L_2(\Omega)^2$, the solution 
$(\uu_h,p_h) \in V_h^0 \times M_h$ of the discrete state equation 
is given by 
\begin{align*}
 a(\uu_h, \myphi_h) + b(\myphi_h,p_h) &= (f, \myphi_h)  
                                      &\forall \myphi_h \in V_h^0, \\
 b(\uu_h, \psi_h) &= 0                &\forall \psi_h \in M_h.
\end{align*}
Let $\U_h$ and $\p_h$ be the solution mappings of the discretized state 
equation and $\U_h^*$, $\p_h^*$ their adjoints, defined analogously to  
the continuous counterparts. 
Furthermore, denote by $\pi_h: L_2(\Omega)^2 \rightarrow V_h$ the 
$L_2$-orthogonal projection onto~$V_h$.
The discretized, reduced optimal control problem reads
\begin{equation}\label{equ:discrete}
\begin{aligned}
 \min_{\ff_h \in V_h} \hat{J}_h(\ff_h) = 
 \frac{\gamma_u}{2}\|\U_h \ff_h - \uu_d^h \|^2 +
 \frac{\gamma_p}{2}\|\p_h \ff_h -p_d^h\|^2 +  
 \frac{\beta}{2} \|\ff_h\|^2,
\end{aligned}
\end{equation}
where $\uu_d^h$, $p_d^h$ are the $L_2$-projections of 
the data onto $V_h$, respectively $M_h$. 

Let us investigate the structure of the algebraic system associated to the 
discretized optimal control problem. 
Let $\{\myphi_j\}_{j=1}^p$  and $\{\psi_k\}_{k=1}^m$  
be the basis functions of the spaces $V_h$ and $M_h$, respectively. Furthermore,
assume that  $\{\myphi_j\}_{j=1}^n$ are the basis functions of $V^0_h$ for some $n<p$.
We expand the discrete solutions $\uu_h$ and $p_h$ as
\begin{align*}
 \uu_h(x) = \sum_{j=1}^n \ub_j \myphi_j(x), \quad
 p_h(x) = \sum_{k=1}^m \pb_k \psi_k(x), 
\end{align*}
and we approximate the control by 
\(
 \ff_h(x)= \sum_{j=1}^p \fb_j \myphi_j(x).
\)
Let  $\mathbf{A} \in \RR^{n \times n}$ and $\mathbf{B} \in \RR^{m \times n}$ 
be the matrices related to the bilinear forms,   
$\mathbf{A} = [a_{ij}] = a(\myphi_i, \myphi_j)$,
$\mathbf{B} = [b_{ij}] = b(\myphi_i, \psi_j)$. Furthermore
consider the mass-matrix \mbox{$\mathbf{M}_{\ff} = [\int_{\Omega} \myphi_i\cdot \myphi_j]\in \RR^{p \times p}$} and its
submatrices $\mathbf{M}_{\uu\ff}=\mathbf{M}_{\ff}(1:n,1:p)\in \RR^{n \times p}$ and  \mbox{$\mathbf{M}_{\uu}=\mathbf{M}_{\ff}(1:n,1:n)\in \RR^{n \times n}$}.
The  
discrete state equation takes the form
\begin{align*}
 \begin{bmatrix}
  \mathbf{A} & \mathbf{B}^T \\
  \mathbf{B} & 0
 \end{bmatrix}
 \begin{bmatrix} \ub \\ \pb \end{bmatrix} =
 \begin{bmatrix} \mathbf{M}_{\uu\ff} \\ 0 \end{bmatrix} \fb.
\end{align*}
After discretizing, the reduced cost functional becomes
\begin{align*}
 \hat{J}_h = \frac{\gamma_u}{2} (\ub-\ub_d)^T\mathbf{M}_{\uu} (\ub - \ub_d)+
       \frac{\gamma_p}{2} (\pb-\pb_d)^T\mathbf{M}_p (\pb-\pb_d) + 
       \frac{\beta}{2} \fb^T \mathbf{M}_{\ff} \fb,
\end{align*}
where  $\mathbf{M}_p = [\int_{\Omega}\psi_i\psi_j]$ and $\ub_d$, $\pb_d$
are the coefficient vectors in the expansions of $\uu_h^d$ and $p_h^d$  
in $V^0_h$, respectively $M_h$.

Let us introduce the matrices   
\begin{align*}
 \mathbf{S} =
 \begin{bmatrix}
  \mathbf{A} & \mathbf{B}^T \\
  \mathbf{B} & 0
 \end{bmatrix},\ 
 \mathbf{M} =
 \begin{bmatrix}
  \gamma_u \mathbf{M}_{\uu} & 0 \\
   0 & \gamma_p \mathbf{M}_p
 \end{bmatrix}, \ 
 \mathbf{L} =
 \begin{bmatrix}
  \mathbf{M}_{\uu\ff} \\ 0
 \end{bmatrix}, \  
 \text{and} \quad
 \xb_d = 
 \begin{bmatrix}
  \ub_d \\ \pb_d 
 \end{bmatrix}.
\end{align*}
After dropping the constant terms  the problem becomes
\begin{align*}
 \underset{\fb}{\min}\  \frac{1}{2} \fb^T \big( 
 \mathbf{L}^T \mathbf{S}^{-1}\mathbf{M}\mathbf{S}^{-1}\mathbf{L} + 
 \beta \mathbf{M}_{\ff}\big) \fb - 
 \fb^T \mathbf{L}^T \mathbf{S}^{-1} \mathbf{M}\, \xb_d ,
\end{align*}
which reduces to solving the linear system
\begin{align}\label{equ:sys}
 \big(\mathbf{L}^T \mathbf{S}^{-1}\mathbf{M}\mathbf{S}^{-1}\mathbf{L} + 
 \beta \mathbf{M}_{\ff}\big) \fb = 
 \mathbf{L}^T \mathbf{S}^{-1} \mathbf{M}\, 
 \xb_d  .
\end{align}
Note that the system matrix is dense, thus \eqref{equ:sys} has to be 
solved using iterative methods, and for increased efficiency we need 
high-quality preconditioners. In Section~\ref{sec:prec}, we construct and 
analyze a multigrid preconditioner for the system  
\begin{align}\label{equ:sysr}
\big(\beta \mathbf{I} + \mathbf{M}_{\ff}^{-1}\mathbf{L}^T \mathbf{S}^{-1}
                        \mathbf{M}\mathbf{S}^{-1}\mathbf{L}\big) \fb = 
 \mathbf{M}_{\ff}^{-1}\mathbf{L}^T \mathbf{S}^{-1} \mathbf{M}\, 
 \xb_d\ ,
\end{align}
which is obtained from~\eqref{equ:sys} by left-multiplying 
with $\mathbf{M}_{\ff}^{-1}$.

We should remark that the system~\eqref{equ:sysr} can be obtained from the 
KKT system associated with the discrete constrained optimal control problem 
associated to~\eqref{equ:cost}-\eqref{equ:stokes} by block-eliminating 
the velocity, pressure, and the  Lagrange multipliers. 
So~\eqref{equ:sysr} is in fact a reduced KKT system. 

\subsection{Estimates and convergence results}
We first  list some results on the Stokes equations and their numerical 
approximation  which are needed for the multigrid analysis. 
\begin{theorem}\label{cond:assume}
There exist constants $C_1=C_1(\U,\Omega)$ and 
$C_2=C_2(\p,\Omega)$ such that the following hold:
\begin{enumerate}
 \item[(a)] smoothing:
  \begin{align}\label{cond:smoothu}
   \|\U\ff\| \leq C_1 \|\ff\|_{H^{-2}(\Omega)^2} \quad 
   \forall \ff \in L_2(\Omega)^2, 
  \end{align}
  and
  \begin{align}\label{cond:smoothp}
   \|\p \ff \| \leq C_2 \|\ff \|_{H^{-1}(\Omega)^2} \quad 
   \forall \ff \in L_2(\Omega)^2; 
  \end{align}
 \item[(b)] approximation:
  \begin{align}\label{cond:approxu}
   \| \U\ff - \U_h\ff\| \leq C_1 h^2 \|\ff\| \quad \forall \ff \in 
   L_2(\Omega)^2, 
  \end{align}
  and
  \begin{align}\label{cond:approxp}
   \| \p\ff - \p_h\ff\| \leq C_2 h \|\ff\| \quad \forall \ff \in 
   L_2(\Omega)^2; 
  \end{align}
 \item[(c)] stability:
  \begin{align}\label{cond:stab}
   \|\U_h \ff\| \leq C_1 \|\ff\| \ \text{ and } \  
   \|\p_h \ff\| \leq C_2 \| \ff\| \quad \forall \ff \in L_2(\Omega)^2.
  \end{align}
\end{enumerate}
\end{theorem}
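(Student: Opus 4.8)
The plan is to establish each of the three items by combining standard Stokes regularity theory with duality arguments and known finite element estimates for the Taylor--Hood discretization. Throughout, the key structural facts are that $\U$ is self-adjoint on $L_2(\Omega)^2$ and that $\Omega$ is convex, so the shift estimate \eqref{equ:regc} holds.

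\textbf{Part (a), smoothing.} For the velocity bound \eqref{cond:smoothu}, I would proceed by a duality/iteration argument. Given $\ff\in L_2(\Omega)^2$, write $(\U\ff,\gv) = (\ff,\U\gv)$ for arbitrary $\gv\in L_2(\Omega)^2$ using self-adjointness. By \eqref{equ:regc}, $\U\gv\in H^2(\Omega)^2\cap H_0^1(\Omega)^2$ with $\|\U\gv\|_{H^2}\le C\|\gv\|$, so $\|\U\gv\|_{H^2_0}\lesssim\|\gv\|$; hence $(\ff,\U\gv)\le\|\ff\|_{H^{-2}}\|\U\gv\|_{H^2_0}\lesssim\|\ff\|_{H^{-2}}\|\gv\|$. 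Taking the supremum over $\gv$ with $\|\gv\|\le 1$ gives $\|\U\ff\|\le C_1\|\ff\|_{H^{-2}(\Omega)^2}$. For the pressure bound \eqref{cond:smoothp}, a one-step version of the same idea works: by the continuous dependence of the Stokes solution, $\U:H^{-1}(\Omega)^2\to H^1_0(\Omega)^2$ is bounded (this is the standard well-posedness in \cite{GR}), and since $\p\ff$ is the pressure associated to the datum $\ff$, one has $\|\p\ff\|_{L_2}\lesssim\|\nabla\U\ff\|_{L_2}+\|\ff\|_{H^{-1}}\lesssim\|\ff\|_{H^{-1}(\Omega)^2}$; alternatively use the inf-sup stability of $b$ to bound $\|\p\ff\|$ by the residual of the momentum equation tested against $V$.

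\textbf{Parts (b) and (c), approximation and stability.} The approximation estimate \eqref{cond:approxu} is the standard $L_2$ error bound for the velocity in the mixed finite element approximation of the Stokes problem: for Taylor--Hood $\mathbf{Q}_2$--$\mathbf{Q}_1$ elements on a convex polygon, the energy-norm error is $O(h)$ in $\|\nabla(\uu-\uu_h)\|$ and, by the Aubin--Nitsche duality argument exploiting the $H^2$-regularity \eqref{equ:regc}, one gains an extra power of $h$ in $L_2$, yielding $\|\U\ff-\U_h\ff\|\le C h^2\|\uu\|_{H^2}\le C_1 h^2\|\ff\|$; I would cite \cite{GR} (or \cite{Ci}) for this. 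The pressure estimate \eqref{cond:approxp} is the corresponding $O(h)$ bound $\|\p\ff-\p_h\ff\|_{L_2}\le C h\|p\|_{H^1}\le C_2 h\|\ff\|$, again from the same references. Finally the stability bounds \eqref{cond:stab} follow immediately: $\|\U_h\ff\|\le\|\U\ff\|+\|\U\ff-\U_h\ff\|\le C_1\|\ff\|_{H^{-2}}+C_1h^2\|\ff\|\lesssim C_1\|\ff\|$ using part (a) and part (b), and likewise for $\p_h$ using \eqref{cond:smoothp} and \eqref{cond:approxp}; a cleaner route is to note that uniform stability is already built into the discrete inf-sup condition for Taylor--Hood elements together with coercivity of $a$ on $V_h^0$.

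\textbf{Main obstacle.} None of the individual ingredients is deep --- they are all classical --- so the work is in assembling the correct references and being careful about the regularity hypothesis. The one genuinely delicate point is the pressure smoothing estimate \eqref{cond:smoothp}: one must verify that the map $\ff\mapsto\p\ff$ really does extend boundedly from $H^{-1}(\Omega)^2$ (not merely $L_2$) into $L_2(\Omega)$, which relies on the inf-sup stability of $b$ over $V\times M$ and on interpreting the momentum equation in the correct dual pairing; I would handle this by writing $b(\myphi,\p\ff)=(\ff,\myphi)-a(\U\ff,\myphi)$, bounding the right side by $(\|\ff\|_{H^{-1}}+\|\nabla\U\ff\|)\|\nabla\myphi\|$, and then dividing by $\|\nabla\myphi\|$ and taking the inf-sup supremum. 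The exponents $h^2$ versus $h$ and $H^{-2}$ versus $H^{-1}$ simply reflect the two-order gap between $\mathbf{Q}_2$ velocities and $\mathbf{Q}_1$ pressures, and I would make sure these match consistently throughout.
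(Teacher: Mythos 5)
Your proof is correct and follows essentially the same route as the paper, which simply cites \cite[Corollary 6.2]{DD} for the velocity smoothing bound, Brezzi's splitting theorem \cite[Theorem 4.3]{Br} for the pressure smoothing bound, \cite{Ste} for the approximation estimates, and then deduces (c) from (a) and (b); your self-adjointness/duality argument and your inf-sup argument are precisely the standard contents of those citations, so you have merely unpacked the references rather than taken a different path.
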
 
\begin{proof}

\noindent
\begin{enumerate}
\item[(a)]
  The inequality~\eqref{cond:smoothu} is a straightforward consequence 
  of \eqref{equ:regc} 
 (see \cite[Corollary 6.2]{DD}), while~\eqref{cond:smoothp} follows immediately from Brezzi's splitting theorem, 
 see, e.g.,
 \cite[Theorem 4.3]{Br}.
\item[(b)]
 This is a standard approximation result, see, e.g.,  \cite{Ste}.
\item[(c)]
Follows immediately from  the estimates in (a) and (b). 
\end{enumerate}    
\end{proof}
We state without proof the following well-known result.
\begin{lemma} \label{lemma:id-proj}
The following 
approximation of the 
 identity by the projection holds:
  \begin{align}\label{cond:proj}
   \|(I-\pi_h)\ff\|_{H^{-k}(\Omega)^2} \leq C h^k \|\ff\| \quad 
   \forall \ff\in L_2(\Omega)^2, 
  \end{align}
  for $k=0,1,2,3$, with $C$ constant independent of $h$. 
\end{lemma}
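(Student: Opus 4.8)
The plan is to use a standard duality (Aubin--Nitsche) argument built on the two defining properties of the $L_2$-orthogonal projection $\pi_h$: self-adjointness, $(\pi_h\ff,\gv)=(\ff,\pi_h\gv)$, and the orthogonality $(I-\pi_h)\ff\perp V_h$. The case $k=0$ is immediate: $\pi_h$ is an orthogonal projection in $L_2(\Omega)^2$, so $\|(I-\pi_h)\ff\|\le\|\ff\|$ and one may take $C=1$.

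For $k\in\{1,2,3\}$, fix $\ff\in L_2(\Omega)^2$ and let $\gv\in H_0^k(\Omega)^2$ be arbitrary; since $(I-\pi_h)\ff\in L_2(\Omega)^2$, its action as an element of $H^{-k}(\Omega)^2$ on $\gv$ is just the $L_2$-pairing $((I-\pi_h)\ff,\gv)$. Using self-adjointness and then $\pi_h\ff\in V_h$ together with $(I-\pi_h)\gv\perp V_h$ one gets
\begin{align*}
 ((I-\pi_h)\ff,\gv)=(\ff,(I-\pi_h)\gv)=((I-\pi_h)\ff,(I-\pi_h)\gv),
\end{align*}
and Cauchy--Schwarz yields $|((I-\pi_h)\ff,\gv)|\le\|(I-\pi_h)\ff\|\,\|(I-\pi_h)\gv\|\le\|\ff\|\,\|(I-\pi_h)\gv\|$. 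It then remains to invoke the standard $L_2$-error estimate $\|(I-\pi_h)\gv\|\le C h^k\|\gv\|_{H^k(\Omega)^2}$, valid for $k\le 3$ because the $\mathbf{Q}_2$ space contains, on each element, all polynomials of total degree $2$; dividing by $\|\gv\|_{H^k(\Omega)^2}$ and taking the supremum over $\gv$ gives $\|(I-\pi_h)\ff\|_{H^{-k}(\Omega)^2}\le C h^k\|\ff\|$ by the definition of the negative-order norm as the dual norm of $H_0^k(\Omega)^2$.

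The only nonroutine ingredient is the approximation estimate for $\pi_h$ over the full range $k=1,2,3$. Since $\pi_h$ is the global $L_2$-projection, $\|(I-\pi_h)\gv\|\le\|\gv-\Pi_h\gv\|$ for every $\Pi_h\gv\in V_h$, so it suffices to choose $\Pi_h\gv$ to be a local quasi-interpolant (for instance an elementwise $L_2$-projection or a Scott--Zhang operator), to bound its error elementwise via Bramble--Hilbert using the shape-regularity and quasi-uniformity of $\T_h$, and to sum over the mesh; for $k=3$ this uses the full approximation order of $\mathbf{Q}_2$, and quasi-uniformity is what makes $C$ independent of $h$. As this is entirely classical I would simply cite it (cf.\ \cite{Ci}), the substance of the proof being the short duality identity above.
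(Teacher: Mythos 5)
The paper states this lemma without proof (``We state without proof the following well-known result.''), so there is no argument of the authors' to compare against. Your Aubin--Nitsche duality argument is correct and is the standard route: the orthogonality identity $((I-\pi_h)\ff,\gv)=((I-\pi_h)\ff,(I-\pi_h)\gv)$, Cauchy--Schwarz, and the classical $L_2$-approximation bound $\|(I-\pi_h)\gv\|\le Ch^k\|\gv\|_{H^k(\Omega)^2}$ for $k\le 3$ (which holds because $\mathbf{Q}_2$ contains all polynomials of total degree $2$ on each element and $\T_h$ is shape-regular and quasi-uniform) give the claim, with $k=0$ being just the $L_2$-stability of $\pi_h$. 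Nothing is missing.
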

\begin{remark}\label{rem:H}
Theorem \ref{cond:assume} and Lemma \ref{lemma:id-proj} imply that 
there  are constants $C_1 = C_1(\Omega, \U)$ and 
$C_2 = C_2(\Omega, \p)$ such that
 \begin{align}\label{equ:u} 
  \|\U(I- \pi_h)\ff\| \leq C_1 h^2 \|\ff\| \quad 
  \forall \ff \in  L_2(\Omega)^2  
 \end{align}
 and
 \begin{align}\label{equ:p} 
  \|\p(I- \pi_h)\ff\| \leq C_2 h \|\ff\| \quad 
  \forall \ff \in  L_2(\Omega)^2.  
 \end{align}
\end{remark}
\begin{lemma}
Let $\tilde{\pi}_h:L_{2,0}(\Omega) \rightarrow M_h$ be the $L_2$-orthogonal 
projection onto~$M_h$. Then 
\begin{align}\label{cond:projp}
 |((I -\tilde{\pi}_h) q, \p \ff)| \leq C h \|q\| \, \|\ff\| 
 \quad \forall q \in L_{2,0}(\Omega), \ff \in L_2(\Omega)^2, 
\end{align}
with $C$ constant independent of $h$. 
\end{lemma}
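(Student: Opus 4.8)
The plan is to use the self-adjointness of the velocity solution operator together with the duality between $\tilde\pi_h$ on $M_h$ and $\pi_h$ on $V_h$, so that the pressure mismatch $(I-\tilde\pi_h)q$ can be transferred onto the force side and controlled by the already-established estimate \eqref{equ:p} from Remark \ref{rem:H}. Concretely, I would first write
\[
 ((I-\tilde\pi_h)q, \p\ff) = (q, \p\ff) - (\tilde\pi_h q, \p\ff),
\]
and then rewrite the pressure pairing in terms of $\p^*$: $(q,\p\ff) = (\p^* q, \ff)$. The difficulty is that $\p^*$ does not interact cleanly with $\pi_h$, so instead I would look for a way to move $I-\tilde\pi_h$ onto $\ff$.

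The key identity I expect to need is a commuting-type relation linking the discrete and continuous problems: applying $\tilde\pi_h$ to the continuous pressure $\p\ff$ is, up to the velocity error, the same as taking the discrete pressure $\p_h$ applied to $\ff$ — or, more robustly, I would exploit that $\p_h\ff$ is unchanged if $\ff$ is replaced by $\pi_h\ff$ (since the discrete state equation only sees $(\ff,\myphi_h)$ with $\myphi_h\in V_h^0\subset V_h$). Thus $\p_h(I-\pi_h)\ff = \p_h\ff - \p_h\pi_h\ff$; I would combine this with the approximation estimate \eqref{cond:approxp}, $\|\p\ff-\p_h\ff\|\le C_2 h\|\ff\|$, and the triangle inequality to relate $(I-\tilde\pi_h)\p\ff$ to $\p(I-\pi_h)$-type quantities. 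The cleanest route: since $\tilde\pi_h$ is the $L_2$-projection onto $M_h$ and $\p_h\ff\in M_h$, we have $\|(I-\tilde\pi_h)\p\ff\| \le \|\p\ff - \p_h\ff\| \le C_2 h\|\ff\|$, directly by the best-approximation property of the orthogonal projection. Then Cauchy–Schwarz gives
\[
 |((I-\tilde\pi_h)q, \p\ff)| = |((I-\tilde\pi_h)q, (I-\tilde\pi_h)\p\ff)| \le \|(I-\tilde\pi_h)q\|\,\|(I-\tilde\pi_h)\p\ff\| \le \|q\|\, C_2 h\|\ff\|,
\]
using $\|(I-\tilde\pi_h)q\|\le\|q\|$.

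So the main obstacle is really just recognizing that one should split the pairing symmetrically — insert $(I-\tilde\pi_h)$ on both factors via self-adjointness and idempotency of $\tilde\pi_h$ — rather than trying to push projections through $\p$ or $\p^*$, which would force an awkward detour through the velocity operator. Once the symmetric splitting is in place, the estimate reduces to the standard finite-element pressure approximation bound \eqref{cond:approxp} plus stability of the $L_2$-projection, both already available. I would then note that the constant $C$ depends only on $C_2$ (hence on $\Omega$ and $\p$) and is independent of $h$, completing the proof.
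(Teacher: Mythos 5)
Your proposal is correct and takes essentially the same route as the paper: both observe that $(I-\tilde\pi_h)q$ is $L_2$-orthogonal to $M_h$ so one may subtract any element of $M_h$ from $\p\ff$, choose the discrete pressure $\p_h\ff$, and conclude via Cauchy--Schwarz and the approximation estimate \eqref{cond:approxp}. The paper inserts $\p_h\ff$ directly while you insert $\tilde\pi_h\p\ff$ and then invoke best approximation to compare with $\p_h\ff$, but this is only a cosmetic difference; the false starts in your first paragraph are harmless since you discard them in favor of the correct argument.
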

\indent{\em Proof.}
We have 
\begin{align*}
 |((I-\tilde{\pi}_h)q,\p\ff)| &= |((I-\tilde{\pi}_h)q,\p\ff-\p_h\ff)| 
 \leq \|(I-\tilde{\pi}_h)q\| \, \|\p\ff -\p_h\ff\| \\
 &\overset{\eqref{cond:approxp}}{\leq} C h \|q\| \, \|\ff\| \quad 
  \forall q \in L_{2,0}(\Omega), \ff \in L_2(\Omega)^2.\qquad \square
\end{align*}
Denote
\[
G_u = \U^* \U\ ,\ \ G_p = \p^* \p\ ,\ \ G_u^h = \U^*_h \U_h\ ,\ \  
G_p^h = \p^*_h \p_h \ .
\]
\begin{lemma}\label{lemma:piH}
 The following approximation properties hold:
 \begin{align}\label{equ:piHu}
  \|\pi_h(G_u^h - G_u)\ff \| \leq C h^2 \|\ff\| \quad \forall \ff \in V_h, 
 \end{align}
 and
 \begin{align}\label{equ:piHp}
  \|\pi_h(G_p^h - G_p)\ff \| \leq C h \|\ff\| \quad \forall \ff \in V_h, 
 \end{align}
 for some constant 
$C$ independent of $h$.
\end{lemma}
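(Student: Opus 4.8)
The plan is to telescope the operator products $G_u^h-G_u$ and $G_p^h-G_p$ and reduce both estimates to the approximation and stability bounds of Theorem~\ref{cond:assume}, using that $\pi_h$ has operator norm $1$ on $L_2(\Omega)^2$.

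For \eqref{equ:piHu} I would write
\[
 G_u^h-G_u=\U_h^*\U_h-\U^*\U=\U_h^*(\U_h-\U)+(\U_h-\U)^*\,\U ,
\]
apply $\pi_h$, and use the triangle inequality. In the first summand $\|\U_h^*\|=\|\U_h\|\le C_1$ by \eqref{cond:stab} and $\|(\U_h-\U)\ff\|\le C_1 h^2\|\ff\|$ by \eqref{cond:approxu}; in the second summand $\|(\U_h-\U)^*\|=\|\U_h-\U\|\le C_1 h^2$ again by \eqref{cond:approxu}, while $\|\U\ff\|\le C\|\ff\|$ since $\U$ is bounded on $L_2(\Omega)^2$ (from \eqref{cond:smoothu} together with $\|\cdot\|_{H^{-2}(\Omega)^2}\le\|\cdot\|_{L_2(\Omega)^2}$, or from \eqref{cond:approxu}--\eqref{cond:stab}). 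This yields the $O(h^2)$ bound. The estimate \eqref{equ:piHp} follows verbatim with $\p$ in place of $\U$: split $G_p^h-G_p=\p_h^*(\p_h-\p)+(\p_h-\p)^*\p$, use $\|\p_h^*\|=\|\p_h\|\le C_2$ from \eqref{cond:stab}, $\|\p_h-\p\|\le C_2 h$ (hence $\|(\p_h-\p)^*\|\le C_2 h$) from \eqref{cond:approxp}, and $\|\p\ff\|\le C_2\|\ff\|$ from \eqref{cond:smoothp}. In fact this argument proves both inequalities for every $\ff\in L_2(\Omega)^2$; the restriction to $V_h$ is merely the form in which the estimate will be used later.

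The one point requiring a little care is the passage from an estimate on an operator to the same estimate on its adjoint. One regards $\U_h-\U$ as a bounded operator $L_2(\Omega)^2\to L_2(\Omega)^2$ and $\p_h-\p$ as a bounded operator $L_2(\Omega)^2\to L_{2,0}(\Omega)$, so that the norms bounded by $C_1 h^2$ and $C_2 h$ coincide with the norms of the adjoints $(\U_h-\U)^*$ and $(\p_h-\p)^*\colon L_{2,0}(\Omega)\to L_2(\Omega)^2$; applying the latter to $\p\ff$ is legitimate because $\p\ff\in L_{2,0}(\Omega)$. It is also true, and convenient though not essential, that $\U_h$ is self-adjoint on $L_2(\Omega)^2$, by the same symmetry computation that shows $\U$ is, so that $\U_h^*=\U_h$. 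Everything else is the triangle inequality and submultiplicativity of operator norms, so I do not anticipate a genuine obstacle.
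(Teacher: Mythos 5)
Your proof is correct, and it takes a genuinely different route from the paper's. The paper argues via the quadratic form: for $\ff \in V_h$ it identifies $(\pi_h(G_u^h - G_u)\ff,\ff) = \|\U_h\ff\|^2 - \|\U\ff\|^2$, factors this as a difference of squares, bounds it by $\|\U_h\ff-\U\ff\|(\|\U_h\ff\|+\|\U\ff\|) \le C h^2\|\ff\|^2$ using \eqref{cond:approxu} and stability, and then passes from the Rayleigh-quotient bound to the operator-norm bound using that $\pi_h(G_u^h-G_u)$ is symmetric as an operator on $V_h$ (and likewise for $\p$). You instead telescope the operator difference as $G_u^h-G_u = \U_h^*(\U_h-\U)+(\U_h-\U)^*\U$ and estimate the two terms directly in operator norm via \eqref{cond:stab}, \eqref{cond:approxu}, and $\|T^*\|=\|T\|$. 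Both arguments rest on the same ingredients from Theorem~\ref{cond:assume}, and both are valid. The paper's route is compact and stays entirely inside $V_h$, but leans on the symmetry of the restricted operator to convert a quadratic-form estimate to a norm estimate. Your route is a touch longer but proves the stronger statement for all $\ff\in L_2(\Omega)^2$, does not require identifying the symmetric restriction, and makes the self-adjointness of $\U_h$, $\p_h$ inessential (though you correctly observe it holds). One thing worth making explicit, as you do, is that $\p_h-\p$ maps $L_2(\Omega)^2$ into $L_{2,0}(\Omega)$, so its adjoint acts on $L_{2,0}(\Omega)\ni\p\ff$; with that in place the argument is complete.
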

\begin{proof}
For $\ff \in V_h$ we have
\begin{align*}
        |(\pi_h(G_u^h - G_u)\ff,\ff)| 
 &=     |\|\U_h\ff\|^2 - \|\U\ff\|^2| \\
 &\leq \|\U_h\ff-\U\ff\|(\|\U_h\ff\| + \|\U\ff\|) \\
 &\leq  C h^2\|\ff\|^2, 
\end{align*}
and \eqref{equ:piHu} follows from the symmetry of $\pi_h(G_u^h - G_u)$.

Similarly, we have
\begin{align*}
    |(\pi_h(G_p^h - G_p)\ff,\ff)| 
 &= |(G_p^h \ff,\ff)_V-(G_p\ff,\ff)_V| \\
 &= |(\p_h \ff,\p_h\ff)_M - (\p\ff,\p\ff)_M| \\
 &= |\|\p_h\ff\|^2 - \|\p\ff\|^2| \\
 &\leq \|\p_h\ff-\p\ff\|(\|\p_h\ff\| + \|\p\ff\|) \\
 &\leq C h\|\ff\|^2 \quad  \forall \ff \in V_h ,
\end{align*}
and \eqref{equ:piHp} follows from the symmetry of $\pi_h(G_p^h - G_p)$.
\end{proof}

From the definition of $H_{\beta}$ follows
\begin{align}\label{equ:condno}
 \beta \|\ff\|^2 \leq ( H_{\beta}\ff,\ff) \leq (\beta + C) \|\ff\|^2  \quad 
 \forall \ff \in L_2(\Omega)^2,
\end{align}
with $C=C(\U, \p, \Omega)$,  
and a similar estimate holds for the discrete Hessian
\begin{align}\label{equ:dhess}
 H_{\beta}^h = \beta I + \gamma_u G_u^h + \gamma_p G_p^h,
\end{align}
which shows that the condition number of $H_{\beta}^h$ is bounded uniformly with respect to $h$, but
potentially increasing with $\beta\downarrow 0$.
The inequality~\eqref{equ:condno} also implies that the cost functional $\hat{J}$ is strictly convex  and has a 
unique minimizer given by 
\begin{align*}
 \ff^{min} = H_{\beta}^{-1}(\gamma_u \U^*\uu_d + \gamma_p\p^*p_d).
\end{align*}
Similarly, the minimizer of the discrete quadratic is
\begin{align*}
 \ff_h^{min} = (H^h_{\beta})^{-1}(\gamma_u \U_h^*\uu_d^h + 
                                  \gamma_p\p_h^*p_d^h).
\end{align*}
In the following theorem, we show that $\ff_h^{min}$ approximates 
$\ff^{min}$ to optimal order in the $L_2$-norm. 
\begin{theorem}
There exists a constant $C = C(\Omega,\U,\p)$ independent of $h$ such that for
$h \leq h_0(\beta,\Omega, \U,\p)$ we have the following 
stability and error estimates:
\begin{eqnarray}
  \label{equ:opts}
  \|\ff^{\min}_h\|& \leq & \|\ff^{\min}\| + 
  \frac{C}{\beta}\big(\gamma_u h^2\|\uu_d\| +\gamma_p h \|p_d\|\big),\\ 
  \label{equ:opterr}
  \|\ff_h^{\min} - \ff^{\min}\|& \leq &\frac{C}{\beta}
 \big(\gamma_u h^2( \|\uu_d\| + \|\ff^{min}\|) + 
      \gamma_p h (\|p_d\| + \|\ff^{min}\| )\big) .
\end{eqnarray}
\end{theorem}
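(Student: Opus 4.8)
The plan is to bound $\ff^{\min}_h-\ff^{\min}$ by inserting the $L_2$-projection $\pi_h\ff^{\min}\in V_h$ and estimating the two pieces $e_h:=\ff^{\min}_h-\pi_h\ff^{\min}\in V_h$ and $(I-\pi_h)\ff^{\min}$ separately. I will use repeatedly that $\U^*=\U$ (and, by the same computation, $\U_h^*=\U_h$); that $\U_h=\U_h\pi_h$ and $\p_h=\p_h\pi_h$, since the discrete state equation only sees the $L_2$-projection of the load onto $V_h^0\subset V_h$; and that the ranges of $\U_h$, $\U_h^*$, $\p_h^*$ lie in $V_h$. The decomposition through $\pi_h\ff^{\min}$ is chosen so that the lowest-order term contributes exactly $\beta\|e_h\|^2$: in $(\ff^{\min}_h-\ff^{\min},e_h)=\|e_h\|^2+((\pi_h-I)\ff^{\min},e_h)$ the second summand vanishes because $(I-\pi_h)\ff^{\min}\perp V_h$.

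For the first estimate, subtract the first-order optimality conditions for \eqref{equ:discrete} and for the continuous problem and test against $e_h\in V_h$; using the orthogonality just noted together with $\U_h\ff^{\min}_h=\U_h\ff^{\min}+\U_h e_h$ and $\p_h\ff^{\min}_h=\p_h\ff^{\min}+\p_h e_h$, one obtains
\begin{align*}
 \beta\|e_h\|^2+\gamma_u\|\U_h e_h\|^2+\gamma_p\|\p_h e_h\|^2
 &=\gamma_u(\uu_d,(\U_h-\U)e_h)+\gamma_p(p_d,(\p_h-\p)e_h)\\
 &\quad-\gamma_u\big((G_u^h-G_u)\ff^{\min},e_h\big)-\gamma_p\big((G_p^h-G_p)\ff^{\min},e_h\big).
\end{align*}
On the right, $|(\uu_d,(\U_h-\U)e_h)|\le C_1h^2\|\uu_d\|\,\|e_h\|$ and $|(p_d,(\p_h-\p)e_h)|\le C_2h\|p_d\|\,\|e_h\|$ by \eqref{cond:approxu}, \eqref{cond:approxp}; for the cross terms, since $\U_h\ff^{\min}=\U_h\pi_h\ff^{\min}$ we write $(G_u^h-G_u)\ff^{\min}=(G_u^h-G_u)\pi_h\ff^{\min}-G_u(I-\pi_h)\ff^{\min}$, pair with $e_h\in V_h$, and bound the first contribution by \eqref{equ:piHu} and the second by $\|G_u(I-\pi_h)\ff^{\min}\|\le C\|\U(I-\pi_h)\ff^{\min}\|\le Ch^2\|\ff^{\min}\|$ via \eqref{equ:u}; the $\gamma_p$ cross term is treated identically, being $O(h\|\ff^{\min}\|\,\|e_h\|)$ by \eqref{equ:piHp} and \eqref{equ:p}. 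Since $\beta\|e_h\|^2$ is dominated by the left-hand side, this gives
\[
 \|e_h\|\le\frac{C}{\beta}\Big(\gamma_uh^2(\|\uu_d\|+\|\ff^{\min}\|)+\gamma_ph(\|p_d\|+\|\ff^{\min}\|)\Big).
\]

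For the projection error, $\ff^{\min}$ carries no a priori smoothness, so \eqref{cond:proj} alone does not apply; instead I exploit that $\U$ and $\p^*$ smooth. From \eqref{equ:sol_cont} and $\U^*=\U$, $\beta\ff^{\min}=\gamma_u\U(\uu_d-\U\ff^{\min})+\gamma_p\p^*(p_d-\p\ff^{\min})$, so it suffices to estimate $\|(I-\pi_h)\U v\|$ and $\|(I-\pi_h)\p^*q\|$ for $v,q\in L_2$. By self-adjointness of $\U$ and of $I-\pi_h$, for any $\phi$ with $\|\phi\|=1$ one has $((I-\pi_h)\U v,\phi)=(v,\U(I-\pi_h)\phi)\le C_1h^2\|v\|$ by \eqref{equ:u}, and similarly $((I-\pi_h)\p^*q,\phi)=(q,\p(I-\pi_h)\phi)\le C_2h\|q\|$ by \eqref{equ:p}; hence $\|(I-\pi_h)\U v\|\le C_1h^2\|v\|$ and $\|(I-\pi_h)\p^*q\|\le C_2h\|q\|$. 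Using boundedness of $\U$, $\p$ on $L_2$, this yields $\|(I-\pi_h)\ff^{\min}\|\le\frac{C}{\beta}(\gamma_uh^2(\|\uu_d\|+\|\ff^{\min}\|)+\gamma_ph(\|p_d\|+\|\ff^{\min}\|))$, and $\|\ff^{\min}_h-\ff^{\min}\|\le\|e_h\|+\|(I-\pi_h)\ff^{\min}\|$ proves \eqref{equ:opterr}. The stability estimate \eqref{equ:opts} follows either from \eqref{equ:opterr} and the triangle inequality for $h\le h_0$, or, in sharper form, from the companion splitting $\ff^{\min}_h=(H_\beta^h)^{-1}\pi_h g+(H_\beta^h)^{-1}(g_h-\pi_h g)$: the data projections cancel ($\U_h^*\uu_d^h=\U_h^*\uu_d$, $\p_h^*p_d^h=\p_h^*p_d$), so $\|g_h-\pi_h g\|\le C(\gamma_uh^2\|\uu_d\|+\gamma_ph\|p_d\|)$ by the duals of \eqref{cond:approxu}, \eqref{cond:approxp}, which with $\beta\|(H_\beta^h)^{-1}\|\le1$ controls the second summand, the first being bounded by $\|\ff^{\min}\|$.

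The step I expect to require the most care is the projection-error estimate. Since $\ff^{\min}$ inherits no regularity beyond what the (dual) Stokes problem provides — in particular only $H^1$-regularity when $\gamma_p>0$, which by a naive argument would give merely an $O(h)$ bound for the $\gamma_u$-part — one cannot apply a direct interpolation/approximation estimate to $\ff^{\min}$; the argument must pass through the smoothing of $\U$ and $\p^*$. It is precisely the self-adjointness of $\U$ (together with the $H^1$-smoothing of $\p^*$ dual to \eqref{cond:smoothp}) that transfers \eqref{equ:u}, \eqref{equ:p} to the needed bounds on $(I-\pi_h)\U$ and $(I-\pi_h)\p^*$ and recovers the correct $h^2$/$h$ split; the smallness condition $h\le h_0(\beta,\Omega,\U,\p)$ then serves only to clean up lower-order contributions in \eqref{equ:opts}.
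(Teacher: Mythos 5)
Your proof is correct but follows a genuinely different route from the paper's. The paper works directly with $\e_h=\ff_h^{\min}-\ff^{\min}$, expands $H_\beta\e_h$ into two residual terms $\A_u,\A_p$, and pairs with $\e_h$; this produces a bound with $\|\ff_h^{\min}\|$ (not $\|\ff^{\min}\|$) on the right, which is then cleared by a bootstrap argument valid for $h\le h_0$. You instead use the classical finite-element decomposition through the quasi-interpolant $\pi_h\ff^{\min}$: the $V_h$-component $e_h=\ff_h^{\min}-\pi_h\ff^{\min}$ is estimated by subtracting the two optimality conditions and testing against $e_h\in V_h$ (a Galerkin-orthogonality argument, aided by the key observations $\U_h\pi_h=\U_h$, $\U_h^*\pi_h=\U_h^*$, etc.), while the projection error $(I-\pi_h)\ff^{\min}$ is controlled by rewriting $\beta\ff^{\min}=\gamma_u\U(\uu_d-\U\ff^{\min})+\gamma_p\p^*(p_d-\p\ff^{\min})$ and transferring \eqref{equ:u}, \eqref{equ:p} through the self-adjointness of $\U$, $I-\pi_h$ and the duality between $\p$ and $\p^*$. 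The payoff of your route is that \eqref{equ:opterr} emerges directly in terms of $\|\ff^{\min}\|$, with no fixed-point step; it also correctly flags and resolves the subtlety that $\ff^{\min}$ has no usable $H^2$-regularity when $\gamma_p>0$, so Lemma~\ref{lemma:id-proj} cannot be applied to $\ff^{\min}$ itself.

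One small caveat: the "sharper form" you offer for the stability estimate asserts $\|(H_\beta^h)^{-1}\pi_h g\|\le\|\ff^{\min}\|=\|H_\beta^{-1}g\|$. That inequality does not follow from positivity alone — $(H_\beta^h)^{-1}\pi_h$ and $H_\beta^{-1}$ are different operators and there is no monotonicity relation between them in general — so that particular chain would need an additional argument. The triangle-inequality fallback you mention first is the one to keep; it matches the paper's level of precision (both arguments in fact yield \eqref{equ:opts} with an extra $O(h^2\|\ff^{\min}\|+h\|\ff^{\min}\|)$ contribution that the stated form tacitly absorbs for $h\le h_0$).
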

\begin{proof}
Let $\e_h = \ff_h^{min} - \ff^{min}$. We have
\begin{align*}
    H_{\beta}\e_h
 &= (\beta I + \gamma_u G_u + \gamma_p G_p) \ff_h^{min} - H_{\beta}\ff^{min} \\
 &= (\beta I + \gamma_u G_u + \gamma_p G_p) \ff_h^{min} 
    -\gamma_u \U^*\uu_d -\gamma_p \p^* p_d \\
 &= (\beta I + \gamma_u G_u^h + \gamma_p G_p^h)\ff_h^{min} +
     \gamma_u(G_u - G_u^h)\ff_h^{min} +
     \gamma_p(G_p - G_p^h)\ff_h^{min} \\ 
 &   \quad -\gamma_u \U^*\uu_d -\gamma_p \p^* p_d \\
 &=  \gamma_u \U_h^* \uu_d^h + \gamma_p \p_h^* p_d^h +
     \gamma_u(G_u - G_u^h)\ff_h^{min} +
     \gamma_p(G_p - G_p^h)\ff_h^{min} \\
 &   \quad -\gamma_u \U^*\uu_d -\gamma_p \p^* p_d \\
 &=  \gamma_u \underset{\A_u}{\underbrace{\big[ (G_u - G_u^h)\ff_h^{min} +
     (\U_h^* - \U^*)\uu_d^h - \U^*(I-\pi_h)\uu_d \big]}} \\
 &   \quad + \gamma_p \underset{\A_p}{\underbrace{
     \big[(G_p - G_p^h)\ff_h^{min}+(\p_h^* - \p^*)p_d^h - 
          \p^*(I-\tilde{\pi}_h)p_d\big]}},
\end{align*}
where we used that $\uu^h_d = \pi_h \uu_d$ and $p^h_d=\tilde{\pi}_h p_d$.
Furthermore, 
\begin{align*}
 \beta\|\e_h\|^2 \overset{\eqref{equ:condno}}{\leq}(H_\beta \e_h, \e_h)
 \leq \gamma_u (\A_u,\e_h) + \gamma_p (\A_p,\e_h),
\end{align*}
and
\begin{align*}
       (\A_u,\e_h)  
 &\overset{\eqref{cond:approxu}, \eqref{equ:u}}{\leq}
   C h^2 \|\uu_d\| \|\e_h\| + ((G_u - G_u^h)\ff_h^{min},\e_h) \\
 &\mspace{-15.0mu} \overset{(I-\pi_h)e_h \perp V_h}{=} 
   C h^2 \|\uu_d\| \|\e_h\| + (\pi_h(G_u -G_u^h)\ff_h^{min}, \pi_h \e_h) \\
 & \qquad + (G_u\ff_h^{min}, (I-\pi_h)\e_h)\\
 & \quad \overset{\eqref{equ:piHu}}{\leq} 
   C h^2\|\e_h\|(\|\uu_d\| + \|\ff_h^{min}\|)+(\U\ff_h^{min}, \U(I-\pi_h)\e_h)\\
 & \quad \overset{\eqref{equ:u}}{\leq} 
   C h^2\|\e_h\|(\|\uu_d\| + \|\ff_h^{min}\|),  
\end{align*}
with $C=C(\U,\Omega)$, where we  have also used the fact that $\U$ is 
self-adjoint.
Similarly, we get
\begin{align*}
   (\A_p,\e_h) 
 &= (p_d^h, (\p_h -\p)\e_h) - ((I-\tilde{\pi}_h)p_d, \p \e_h) +
    ((G_p - G_p^h)\ff_h^{min}, \e_h)\\
 &\overset{\eqref{cond:approxp}, \eqref{cond:projp}}{\leq}
    C h \|p_d\|\|\e_h\| + ((G_p - G_p^h)\ff_h^{min}, \e_h)\\   
 &\mspace{-15.0mu} \overset{(I-\pi_h)\e_h \perp V_h}{=} 
    C h \|p_d\| \|\e_h\| + (\pi_h(G_p -G_p^h)\ff_h^{min}, \pi_h \e_h) \\
 & \qquad + (G_p\ff_h^{min}, (I-\pi_h)\e_h)\\
 & \quad \overset{\eqref{equ:piHp}}{\leq} 
    C h\|\e_h\|(\|p_d\| + \|\ff_h^{min}\|) + (\p\ff_h^{min}, \p(I-\pi_h)\e_h)\\
 & \quad \overset{\eqref{equ:p}}{\leq} 
    C h\|\e_h\|(\|p_d\| + \|\ff_h^{min}\|),   
\end{align*}
with $C=C(\p,\Omega)$.
These estimates yield 
\begin{align*}
 \|\ff_h^{min}-\ff^{min}\| \leq \frac{C}{\beta}
 \Big(\gamma_u h^2 (\|u_d\| + \|\ff_h^{min}\|) +
      \gamma_p h   (\|p_d\| + \|\ff_h^{min}\| \Big).
\end{align*}
Since 
\begin{align*}
       \|\ff_h^{min}\| 
 &\leq \|\ff^{min}\| + \|\ff_h^{min} -\ff^{min}\|\\ 
 &\leq \|\ff^{min}\| + \frac{C}{\beta}
       \Big(\gamma_u h^2 (\|u_d\| + \|\ff_h^{min}\|) +
            \gamma_p h   (\|p_d\| + \|\ff_h^{min}\| \Big),
\end{align*}
we obtain \eqref{equ:opts} and \eqref{equ:opterr} for 
$h$ sufficiently small. 
\end{proof}

\section{Two-grid and multigrid preconditioner for the discrete Hessian}
\label{sec:prec}
In this section, we use the multigrid techniques developed in \cite{DD}  
to construct and analyze a two-level symmetric preconditioner for the 
discrete Hessian $H_{\beta}^h$ defined in \eqref{equ:dhess}. The extension 
to a multigrid preconditioner follows the same strategy as in \cite{DD} and
is further explained in great detail in~\cite{DP}.

For the remainder of this paper we consider on $V_h$ the Hilbert-space 
structure inherited from $L_2(\Omega)^2$. Furthermore, we consider the 
$L_2$-orthogonal decomposition $V_h = V_{2h}\oplus W_{2h}$ and let 
$\pi_{2h}$ be the $L_2$-projector onto $V_{2h}$. 
The analysis in \cite{DD} suggests that $H_{\beta}^h$ is well 
approximated by
\begin{align*}
  T_{\beta}^h \ \overset{\text{def}}{=} \ 
 H_{\beta}^{2h} \pi_{2h} + \beta(I-\pi_{2h}).
\end{align*}
For completeness we briefly recall the heuristics leading to the definition 
of~$T_{\beta}^h$. As usual in the multigrid literature, for 
$\ff_h\in V_h$ we regard $\pi_{2h}\ff_h$ as the ``smooth''  component of $\ff_h$, and 
$(I-\pi_{2h})\ff_h$ as the ``rough'' or ``oscillatory'' component; so the projector
$(I-\pi_{2h})$ extracts the ``oscillatory''  part of a function in  $V_h$.
If we write $H_{\beta}^{h} = H_{0}^{h} + \beta I$ and take into account the
``smoothing'' properties of $H_{0}^{h}$ (these are due to the compactness of the operator
$H_{0}$ which $H_{0}^{h}$ approximates), it follows that  the products $(I-\pi_{2h})H_{0}^{h}$ and 
$H_{0}^{h}(I-\pi_{2h})$ are almost negligible. So
\begin{eqnarray}
\nonumber
H_{\beta}^{h} &=& (\pi_{2h}+(I-\pi_{2h}))(H_{0}^{h} + \beta I)(\pi_{2h}+(I-\pi_{2h}))\\
\label{eq:twogridconstr}
&\approx &
\pi_{2h} (H_{0}^{h}+ \beta I)\pi_{2h}+\beta (I-\pi_{2h})\ .
\end{eqnarray}
Furthermore, when applied to the  ``smooth'' component $\pi_{2h} \ff_h$ of a function $\ff_h$,
it is expected that $H_{0}^h\pi_{2h} \ff_h \approx H_{0}\pi_{2h} \ff_h \approx H_{0}^{2h}\pi_{2h} \ff_h$,
hence the idea to  replace in~\eqref{eq:twogridconstr} $H_{0}^{h}$ by $H_{0}^{2h}$, which gives 
rise to  $T_{\beta}^h$.

Since $\pi_{2h}$ is a projection, 
$(T_{\beta}^h)^{-1}$ is computed explicitly as
\begin{align*}
  L_{\beta}^h\  \overset{\text{def}}{=}\  (T_{\beta}^h)^{-1} = 
 (H_{\beta}^{2h})^{-1}\pi_{2h} + \beta^{-1}(I-\pi_{2h}). 
\end{align*}
We propose $L_{\beta}^h \in \mathcal{L}(V_h)$ as a two-level preconditioner 
for $H_{\beta}^h$. 
To assess the quality of the preconditioner we use the spectral distance 
introduced in \cite{DD}, defined for two symmetric positive definite operators 
$T_1, T_2 \in \Lo(V_h)$ as 
\begin{align}
 d_h(T_1,T_2) = \underset{w \in V_h \setminus \{0\}}{\sup}
                \Bigg|\ln \frac{(T_1 w,w)}{(T_2 w,w)}\Bigg|.
\end{align}
If $L_{\beta}^h$ is a preconditioner for $H_{\beta}^h$ then 
the spectral radius $\rho(I-L_{\beta}^h H_{\beta}^h)$, 
which is an accepted quality-measure 
for a preconditioner, is controlled by the spectral distance between 
$L_{\beta}^h$ and $(H_{\beta}^h)^{-1}$ (see Lemma \ref{lemma:app} 
in  Appendix \ref{sec:app} for a 
precise formulation). 
The advantage of using the spectral distance over 
$\rho(I-L_{\beta}^h H_{\beta}^h)$ is that the former is a true distance 
function.
\begin{theorem}\label{thm:prec}
For $h < h_0(\beta,\Omega,\U,\p)$ there exists a constant
$C=C(\Omega,\U,\p)$
such that
\begin{align}\label{equ:dist_prec}
  d_h(H_{\beta}^h,T_{\beta}^h) \leq \frac{C}{\beta}(\gamma_u h^2 + \gamma_p h).
\end{align}
\end{theorem}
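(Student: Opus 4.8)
The plan is to reduce the claim to a pointwise (in $w\in V_h$) comparison of the quadratic forms of $H_\beta^h$ and $T_\beta^h$, exploiting that $T_\beta^h$ retains the coercivity constant $\beta$. Writing $w=\pi_{2h}w+(I-\pi_{2h})w$ and using that $\pi_{2h}$ is an $L_2$-orthogonal projection onto $V_{2h}\subseteq V_h$, a direct computation gives, for every $w\in V_h$,
\begin{align*}
 (H_\beta^h w,w) &= \beta\|w\|^2 + \gamma_u\|\U_h w\|^2 + \gamma_p\|\p_h w\|^2,\\
 (T_\beta^h w,w) &= \beta\|w\|^2 + \gamma_u\|\U_{2h}\pi_{2h}w\|^2 + \gamma_p\|\p_{2h}\pi_{2h}w\|^2,
\end{align*}
so that $(T_\beta^h w,w)\ge\beta\|w\|^2$ and
\begin{align*}
 (H_\beta^h w,w)-(T_\beta^h w,w) &= \gamma_u\big(\|\U_h w\|^2-\|\U_{2h}\pi_{2h}w\|^2\big) + \gamma_p\big(\|\p_h w\|^2-\|\p_{2h}\pi_{2h}w\|^2\big).
\end{align*}
The coercivity of $T_\beta^h$ is exactly what produces the $1/\beta$ factor in the final bound.

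The heart of the proof is the two estimates $\big|\|\U_h w\|^2-\|\U_{2h}\pi_{2h}w\|^2\big|\le Ch^2\|w\|^2$ and $\big|\|\p_h w\|^2-\|\p_{2h}\pi_{2h}w\|^2\big|\le Ch\|w\|^2$. For the first, I would factor the difference of squares: the factor $\|\U_h w\|+\|\U_{2h}\pi_{2h}w\|$ is $\le C\|w\|$ by the stability bounds~\eqref{cond:stab} (applied at levels $h$ and $2h$) together with $\|\pi_{2h}w\|\le\|w\|$, while the factor $\big|\,\|\U_h w\|-\|\U_{2h}\pi_{2h}w\|\,\big|\le\|\U_h w-\U_{2h}\pi_{2h}w\|$ is handled by inserting the continuous solution operator:
\begin{align*}
 \U_h w-\U_{2h}\pi_{2h}w &= (\U_h-\U)w + \U(I-\pi_{2h})w + (\U-\U_{2h})\pi_{2h}w.
\end{align*}
The three terms are $\le Ch^2\|w\|$ by the approximation estimate~\eqref{cond:approxu} at level $h$, by~\eqref{equ:u} at level $2h$, and by~\eqref{cond:approxu} at level $2h$, respectively. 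The pressure estimate is obtained the same way, now with the first-order bounds~\eqref{cond:approxp} and~\eqref{equ:p}, yielding $\le Ch\|w\|$. Here one uses that $\T_{2h}$ belongs to the same shape-regular, quasi-uniform family as $\T_h$, so all constants at level $2h$ are comparable to those at level $h$ and independent of $h$.

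Combining the above gives $\big|(H_\beta^h w,w)-(T_\beta^h w,w)\big|\le C(\gamma_u h^2+\gamma_p h)\|w\|^2\le\tfrac{C}{\beta}(\gamma_u h^2+\gamma_p h)\,(T_\beta^h w,w)$ for all $w\in V_h$. Setting $\delta:=\tfrac{C}{\beta}(\gamma_u h^2+\gamma_p h)$ and choosing $h_0=h_0(\beta,\Omega,\U,\p)$ so that $\delta\le\tfrac12$ for $h<h_0$, the elementary bound $|\ln(1+x)|\le2|x|$ for $|x|\le\tfrac12$ gives
\begin{align*}
 \Big|\ln\frac{(H_\beta^h w,w)}{(T_\beta^h w,w)}\Big| &= \Big|\ln\Big(1+\frac{(H_\beta^h w,w)-(T_\beta^h w,w)}{(T_\beta^h w,w)}\Big)\Big| \le 2\delta,
\end{align*}
and taking the supremum over $w\in V_h\setminus\{0\}$ yields~\eqref{equ:dist_prec} after renaming $2C$ as $C$. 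I expect no serious obstacle: the essential ``smoothing''/compactness of $\U$ and $\p$ has already been packaged into Theorem~\ref{cond:assume} and Remark~\ref{rem:H}, so the only real care needed is in the three-term splitting above and in checking that those estimates may legitimately be invoked on the coarse mesh $\T_{2h}$.
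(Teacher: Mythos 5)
Your proof is correct and follows essentially the same route as the paper: both reduce the spectral-distance estimate to bounding $(T_\beta^h w,w)-(H_\beta^h w,w)$ against $\beta\|w\|^2$, both insert the continuous solution operator to produce a three-term splitting estimated via Theorem~\ref{cond:assume} and Remark~\ref{rem:H}, and both close with the same logarithmic inequality. The only cosmetic difference is that you factor the quadratic-form difference as a difference of squares at the level of $\U$ and $\p$, whereas the paper works with $G_u=\U^*\U$, $G_p=\p^*\p$ and uses their symmetry to obtain the (slightly stronger) operator-norm bound $\|(T_\beta^h-H_\beta^h)\ff\|\le C(\gamma_u h^2+\gamma_p h)\|\ff\|$ before passing to quadratic forms.
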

\indent{\em Proof.}
 We have
 \begin{equation}\label{equ:diff}
 \begin{aligned}
     T_{\beta}^h - H_{\beta}^h 
  &= H_{\beta}^{2h} \pi_{2h} + 
     \beta(I-\pi_{2h}) - H_{\beta}^h \\
  &= (\beta I + \gamma_u G_u^{2h} + \gamma_p G_p^{2h}) \pi_{2h} +
     \beta(I-\pi_{2h}) - (\beta I + \gamma_u G_u^h + \gamma_p G_p^h)\\
  &= \gamma_u(G_u^{2h}\pi_{2h} - G_u^h) + 
     \gamma_p(G_p^{2h}\pi_{2h} - G_p^h). 
\end{aligned}
\end{equation}
We write
\begin{align*}
 G_u^{2h}\pi_{2h} - G_u^h = (G_u^{2h} - G_u) \pi_{2h} +
                            (G_u - G_u^h) \pi_{2h} +
                            G_u^{h}(\pi_{2h} - I).
\end{align*}
For any $\gv \in V_h$ we have 
\begin{align*}
    |((G_u^{2h}-G_u)\gv,\gv)| 
 &= |(\U_{2h}^*\U_{2h}-\U^*\U)\gv,\gv)| = |\|U_{2h}\gv\|^2-\|\U\gv\|^2| \\
 &\leq \|(\U_{2h}-\U)\gv \| \ (\|\U_{2h}\gv\| + \|\U\gv\|) 
 \overset{\eqref{cond:approxu}}{\leq} C h^2\|\gv\|^2,
\end{align*}
which implies 
\begin{align*}
 \|(G_u^{2h}-G_u)\gv \|\leq C h^2 \|\gv\| 
\end{align*}
since $G_u^{2h}-G_u$ is symmetric on $V_h$. 
In particular, 
\begin{align*}
 \|(G_u^{2h}-G_u)\pi_{2h}\ff \|\leq C h^2 \|\ff\| \quad \forall \ff \in V_h.
\end{align*}
Similarly, it can be shown that
\begin{align*}
 \|(G_u-G_u^h)\pi_{2h}\ff\| \leq C h^2\|\ff\|. 
\end{align*}
Finally, we estimate
\begin{align*}
 \|G_u^h(I-\pi_{2h})\ff\| &= \|\U_h^* \U_h(I-\pi_{2h})\ff\| \leq 
 C \|\U_h(I-\pi_{2h})\ff\| \\ &\leq
 C \|\U(I-\pi_{2h})\ff\| + C \|(\U-\U_h)(I-\pi_{2h})\ff\| \\
 &\overset{\eqref{equ:u}, \eqref{cond:approxu}}{\leq}
 C_1 h^2 \|\ff\| + C_2h^2\|(I-\pi_{2h})\ff\| \leq C h^2\|\ff\|, 
 \quad \forall \ff \in V_h.
\end{align*}
Combining the last three estimates, we obtain 
\begin{align*}
 \|(G_u^{2h}\pi_{2h} -G_u) \ff \| \leq C h^2 \|\ff\| \quad \forall \ff \in V_h.
\end{align*}
The second term in \eqref{equ:diff} is estimated in a similar way, to obtain
\begin{align*}
 \|(G_p^{2h}\pi_{2h} -G_p) \ff \| \leq C h \|\ff\| \quad \forall \ff \in V_h, 
\end{align*}
which together with the previous estimate yields 
\begin{align*}
 \|(T_{\beta}^h - H_{\beta}^h)\ff\| \leq C(\gamma_u h^2 + \gamma_p h) \|\ff\|
 \quad \forall \ff \in V_h. 
\end{align*}
It follows that 
\begin{align*}
         \Bigg|\frac{(T_{\beta}^h\ff,\ff)}{(H_{\beta}^h\ff,\ff)} -1 \Bigg|
 &\leq  C\frac{(\gamma_u h^2 + \gamma_p h) \|\ff\|^2}
              {\gamma_u(H_u^h \ff,\ff) + \gamma_p (H_p^h \ff,\ff) + 
               \beta \|\ff\|^2} 
  \leq   \frac{C}{\beta}(\gamma_u h^2 + \gamma_p h).
\end{align*}
Assuming $C \beta^{-1}(\gamma_u h_0^2 + \gamma_p h_0) = \alpha <1$, 
and $0<h\leq h_0$ we conclude that $T_{\beta}^h$ is symmetric positive definite, and
\begin{align*}
  \underset{\ff \in V_h\setminus\{0\}}{\sup} 
  \Bigg|\ln \frac{(T_{\beta}^h\ff,\ff)}{(H_{\beta}^h\ff,\ff)} \Bigg|
 &
  \leq \frac{|\ln(1-\alpha)|}{\alpha}
  \underset{\ff \in V_h \setminus \{0\}}
  \sup \Bigg|\frac{(T_{\beta}^h\ff,\ff)}{(H_{\beta}^h\ff,\ff)} -1 \Bigg|\\
 &\leq \frac{|\ln(1-\alpha)|}{\alpha}
       \frac{C}{\beta}(\gamma_u h^2 + \gamma_p h)\\
 &\leq \frac{C}{\beta}(\gamma_u h^2 + \gamma_p h), 
\end{align*}
where we also used that for $\alpha \in (0,1)$, 
$x \in [1-\alpha,1+\alpha]$ we have
\begin{align*}
 \frac{\ln(1+\alpha)}{\alpha} |1-x| \leq |\ln x| \leq
 \frac{|\ln(1-\alpha)|}{\alpha}|1-x|.\qquad \square
\end{align*}

A consequence of Theorem \ref{thm:prec}, that legitimizes the use of 
$L_{\beta}^h$ as a preconditioner for the Hessian, is stated in the following 
corollary. 
\begin{corollary}
There exists a constant $C=C(\Omega,\U,\p)$ such that 
\begin{align}\label{equ:dist_prec2}
 d_h(L_{\beta}^h, (H_{\beta}^h)^{-1}) \leq 
 \frac{C}{\beta} (\gamma_u h^2 + \gamma_p h) .
\end{align}
for $h \leq h_0(\beta,\Omega,\U,\p)$. 
\end{corollary}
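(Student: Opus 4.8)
The plan is to transfer the spectral-distance estimate from the pair $(H_\beta^h, T_\beta^h)$ to the pair of their inverses $((H_\beta^h)^{-1}, (T_\beta^h)^{-1}) = ((H_\beta^h)^{-1}, L_\beta^h)$, and this is essentially free once one observes that the spectral distance $d_h$ is invariant under taking inverses. The key algebraic fact is: for any symmetric positive definite $T_1, T_2 \in \Lo(V_h)$ one has $d_h(T_1^{-1}, T_2^{-1}) = d_h(T_1, T_2)$. This follows because the quantity
\[
\sup_{w \neq 0} \left| \ln \frac{(T_1 w, w)}{(T_2 w, w)} \right|
\]
can be rewritten, via the Courant–Fischer / Rayleigh-quotient characterization, in terms of the extreme eigenvalues of the generalized problem $T_1 w = \lambda T_2 w$: indeed $d_h(T_1, T_2) = \max(|\ln \lambda_{\max}|, |\ln \lambda_{\min}|)$, where $\lambda_{\max}, \lambda_{\min}$ are the largest and smallest eigenvalues of $T_2^{-1} T_1$. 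Since the eigenvalues of $T_2 T_1^{-1}$ (the generalized problem for the inverses) are exactly the reciprocals of those of $T_2^{-1} T_1$, and $|\ln(1/\lambda)| = |\ln \lambda|$, the two spectral distances coincide.

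Concretely, I would carry out the argument in three short steps. First, invoke Theorem~\ref{thm:prec} to get $d_h(H_\beta^h, T_\beta^h) \leq \frac{C}{\beta}(\gamma_u h^2 + \gamma_p h)$ for $h \leq h_0$; in particular the proof of that theorem already establishes that $T_\beta^h$ is symmetric positive definite for such $h$, so that $L_\beta^h = (T_\beta^h)^{-1}$ is well defined and SPD, and $(H_\beta^h)^{-1}$ is SPD by~\eqref{equ:condno} (or \eqref{equ:dhess}). Second, establish the inversion-invariance identity $d_h(S_1, S_2) = d_h(S_1^{-1}, S_2^{-1})$ for SPD $S_i$; alternatively, and even more directly, substitute $w = S_2^{-1} v$ (equivalently $v = S_2 w$ ranges over all of $V_h \setminus \{0\}$) into the defining supremum, which converts $(S_1^{-1} v, v)/(S_2^{-1} v, v)$ back into a ratio of the original form with roles unchanged. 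Third, apply this identity with $S_1 = H_\beta^h$ and $S_2 = T_\beta^h$:
\[
d_h\big(L_\beta^h, (H_\beta^h)^{-1}\big) = d_h\big((T_\beta^h)^{-1}, (H_\beta^h)^{-1}\big) = d_h\big(T_\beta^h, H_\beta^h\big) = d_h\big(H_\beta^h, T_\beta^h\big) \leq \frac{C}{\beta}(\gamma_u h^2 + \gamma_p h),
\]
using the symmetry of $d_h$ in its arguments. This gives~\eqref{equ:dist_prec2} with the same constant $C$ and the same threshold $h_0$.

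I do not anticipate a genuine obstacle here; the only point requiring a line of care is the inversion-invariance of $d_h$, and even that reduces to the elementary substitution $v = S_2 w$ in the supremum (no eigenvalue theory is strictly needed, though the eigenvalue reformulation makes the symmetry transparent). If one wanted to avoid the identity entirely, one could instead combine Theorem~\ref{thm:prec} with Lemma~\ref{lemma:app} of Appendix~\ref{sec:app}, but routing through the inversion-invariance of the spectral distance is cleaner and keeps the constant explicit.
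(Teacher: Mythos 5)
Your main argument is correct and is the expected proof: Theorem~\ref{thm:prec} gives $d_h(H_\beta^h,T_\beta^h)\le\frac{C}{\beta}(\gamma_u h^2+\gamma_p h)$, and the corollary follows from the invariance of the spectral distance under taking inverses together with its symmetry in its two arguments. The paper states the corollary without proof, evidently treating this invariance as a known property of $d_h$ (it is established in the cited work of Dr\u{a}g\u{a}nescu and Dupont), so your route matches the intended one.

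One small caution on your ``even more directly'' alternative: the substitution $v=S_2 w$ does \emph{not} by itself convert $(S_1^{-1}v,v)/(S_2^{-1}v,v)$ into $(S_1 w,w)/(S_2 w,w)$ or its reciprocal; one gets $\bigl(S_2 S_1^{-1}S_2 w,w\bigr)/\bigl(S_2 w,w\bigr)$, which is not the desired ratio. The substitution that actually works is $z=A^{1/2}u$ where $A=S_2^{-1/2}S_1S_2^{-1/2}$ is the ``whitened'' operator: after first reducing both suprema to Rayleigh quotients of $A$ and of $A^{-1}$ (via $u=S_2^{1/2}w$ and $z=S_2^{-1/2}v$), the substitution $z=A^{1/2}u$ turns $(A^{-1}z,z)/(z,z)$ into $(u,u)/(Au,u)$, the reciprocal of $(Au,u)/(u,u)$, whence the two suprema of $|\ln(\cdot)|$ agree. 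Your eigenvalue/Rayleigh-quotient argument is the cleaner and correct justification; the shortcut should either be dropped or corrected to use the square root of $A$ rather than $S_2$.
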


Note that, for fixed $\beta$, since the spectral distance between the operators decreases with $h\downarrow 0$, the quality of the preconditioner 
increases with $h\downarrow 0$; this  is different from classical multigrid preconditioners 
for elliptic problems, where the spectral distance is bounded above by a constant that is independent of $h$.

\junk{
The details of how to extend the two-grid preconditioner to a multigrid preconditioner
that exhibits the same optimal-order quality~\eqref{equ:dist_prec2} can be found in~\cite{DP}. We should just remark that
the classical V-cycle idea does not yield satisfactory results, and that the optimal order multigrid preconditioner has
structure similar to a classical W-cycle. Also, another important difference between this and the classical multigrid is that here
the base case needs to be chosen sufficiently fine, whereas in classical multigrid it can be as coarse as possible, in general.
} 

Next, we briefly describe how to extend the two-grid preconditioner to a  
multigrid preconditioner that exhibits the same optimal-order 
quality~\eqref{equ:dist_prec2} and is less costly to apply.
If we use the classical V-cycle idea to define recursively a multigrid 
preconditioner, the resulting preconditioner is suboptimal, that is, the quality 
of the preconditioner does not improve as $h\downarrow 0$, 
it is simply mesh-independent. 
To construct an improved preconditioner we introduce the operators
\begin{align*}
  \mathcal{G}_h:\mathcal{L}(V_{2h}) \rightarrow \mathcal{L}(V_h), 
  \quad \mathcal{G}_h(T) = T \pi_{2h} + \beta^{-1}(I - \pi_{2h})
\end{align*}
and
\begin{align*}
 \mathcal{N}_{h} : \mathcal{L}(V_h) \rightarrow \mathcal{L}(V_h), \quad
 \mathcal{N}_h(X) = 2X - X H_{\beta}^h X.  
\end{align*}
Note that 
$\mathcal{N}_h$ is related to the Newton iterator 
for the equation $X^{-1} - H^h_{\beta}=0$, i.e., $\mathcal{N}_h(X_0)$ is
the first Newton iterate starting at $X_0$. 
Thus, if $X_0$ is a good approximation for $(H^h_{\beta})^{-1}$
then $\mathcal{N}_h(X_0)$ is significantly closer to $(H_{\beta}^h)^{-1}$ than 
$X_0$. 
We follow \cite{DP} and define the multigrid preconditioner $K_{\beta}^h$ 
using the following algorithm:
\newpage
\begin{enumerate}
 \item 
   \verb+if+ \emph{coarsest level}
    \[ K_{\beta}^h = (H_{\beta}^h)^{-1} \]
 \item \verb+else+
    \begin{description}
    \item 
     \verb+if+ \emph{intermediate level}
     \[ K_{\beta}^h = \mathcal{N}_h(\mathcal{G}_h(K_{\beta}^{2h}))\]
    \item 
     \verb+else+ \% \emph{finest level}
     \[ K_{\beta}^h = \mathcal{G}_h(K_{\beta}^{2h})\]
    \item 
     \verb+end if+
     \end{description}
 \item \verb+end if+
\end{enumerate}
Since the application of $\mathcal{N}_h$ requires a matrix-vector
multiplication  by $H_{\beta}^h$, which for large scale problems is expected
to be very costly at the finest level, we prefer that no such matrix-vector 
multiplication  
is computed inside the preconditioner. 
This is the reason why 
we treat the cases of intermediate and 
finest resolutions differently.
\junk{
Note that the application of $\mathcal{N}_h$ requires a matrix-vector 
multiplication  by $H_{\beta}^h$, which for large scale problems is expected 
to be very costly at the finest level. We prefer that no such matrix-vector 
multiplication is computed inside the preconditioner, therefore (that is 
why) we treat the cases of intermediate and finest resolutions differently.  
}
In practice, neither $H_{\beta}^h$ nor $K_{\beta}^h$ is ever formed, so
both are applied matrix-free (see \cite{DP} for details).

The analysis 
of the multigrid preconditioner relies on the 
estimate for the two-grid preconditioner and properties of the spectral 
distance. 
We recall here Lemma $5.3$ from \cite{DD} that is needed for 
the analysis. 
\begin{lemma}\label{lemma:err}
Let $(e_i)_{i\geq 0}$ and $(a_i)_{i\geq 0}$ be positive numbers satisfying the 
recursive inequality 
\begin{align*}
 e_{i+1} \leq C(e_i + a_{i+1})^2
\end{align*} 
and
\begin{align*}
 a_{i+1} \leq a_i \leq f^{-1} a_{i+1}
\end{align*}
for some $0<f<1$. If $a_0\leq \frac{f}{4C}$ and if $e_0\leq 4Ca_0^2$, then 
\begin{align*}
 e_i \leq 4Ca_i^2, \quad \forall i>0.
\end{align*}
\end{lemma}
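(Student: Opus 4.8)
The plan is to establish, by induction on $i$, the slightly stronger claim that $e_i \leq 4Ca_i^2$ for \emph{all} $i \geq 0$; the asserted range $i > 0$ is then immediate. The base case $i = 0$ is exactly the hypothesis $e_0 \leq 4Ca_0^2$, so all the content lies in the inductive step.

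First I would record a consequence of the monotonicity hypothesis $a_{i+1} \leq a_i$: the sequence $(a_i)$ is non-increasing, so $a_i \leq a_0 \leq \frac{f}{4C}$ for every $i$, equivalently $4Ca_i \leq f$. This is the key device, since it lets one ``demote'' the quadratic quantity $4Ca_i^2$ (which is what the induction hypothesis controls) to a linear one:
\begin{align*}
 4Ca_i^2 = (4Ca_i)\,a_i \leq f a_i.
\end{align*}

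For the inductive step, suppose $e_i \leq 4Ca_i^2$. Since all quantities involved are nonnegative, the recursive inequality together with the previous display gives
\begin{align*}
 e_{i+1} \leq C(e_i + a_{i+1})^2 \leq C\bigl(4Ca_i^2 + a_{i+1}\bigr)^2 \leq C\bigl(f a_i + a_{i+1}\bigr)^2 .
\end{align*}
Now the other half of the hypothesis, $a_i \leq f^{-1}a_{i+1}$, reads $f a_i \leq a_{i+1}$, whence
\begin{align*}
 e_{i+1} \leq C\bigl(a_{i+1} + a_{i+1}\bigr)^2 = 4Ca_{i+1}^2 ,
\end{align*}
which closes the induction.

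I do not anticipate a genuine obstacle here: the proof is a short induction, and the only points requiring care are (i) using the two inequalities relating $a_i$ and $a_{i+1}$ in the correct directions -- $a_i \leq a_0$ to pass from $4Ca_i^2$ to $f a_i$, and $a_i \leq f^{-1}a_{i+1}$ to pass from $f a_i$ to $a_{i+1}$ -- and (ii) tracking the constants so that the factor $4C$ regenerates itself exactly from level $i$ to level $i+1$.
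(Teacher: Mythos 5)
Your proof is correct. The paper itself does not reprove this lemma (it cites it as Lemma 5.3 of the reference [DD]), so there is no in-text proof to compare against; but your induction is exactly the standard argument one would write: the smallness hypothesis $4Ca_0 \leq f$ downgrades the quadratic bound $4Ca_i^2$ to the linear bound $fa_i$, and the lower decay bound $fa_i \leq a_{i+1}$ then converts it to $a_{i+1}$, so that $C(e_i + a_{i+1})^2 \leq C(2a_{i+1})^2 = 4Ca_{i+1}^2$ regenerates the constant. All steps -- positivity to justify squaring, the two directions in which the inequalities $a_{i+1}\le a_i\le f^{-1}a_{i+1}$ are used -- are handled correctly.
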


\begin{theorem}\label{thm:multi}
Assume that $C\beta^{-1}(\gamma_uh_0^2+\gamma_p h_0) < 2^{-5}$, where 
$C$ is the constant  from 
Theorem~\ref{thm:prec} and $K_{\beta}^{h_0} = (H_{\beta}^{h_0})^{-1}$. 
Then 
\begin{align}\label{equ:dist_prec_multi}
  d_h\big(K_{\beta}^h, (H_{\beta}^h)^{-1}\big) \leq 
  2\frac{C}{\beta}(\gamma_u h^2 + \gamma_p h), \quad 
 \text{for } h = 2^{-l}h_0, \, l \geq 2.
\end{align}
\end{theorem}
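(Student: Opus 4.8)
The plan is to run an induction on the level $l$, using the two-grid estimate from Theorem~\ref{thm:prec} together with the recursion in Lemma~\ref{lemma:err}, set up in terms of the spectral distance. First I would record the two properties of the spectral distance $d_h$ that make everything work: it is a genuine metric on the cone of symmetric positive definite operators in $\Lo(V_h)$ (triangle inequality), and it behaves well under the two multigrid operations. Concretely, I expect that applying $\mathcal{G}_h$ to a pair of operators on $V_{2h}$ does not increase their spectral distance, i.e. $d_h(\mathcal{G}_h(T_1),\mathcal{G}_h(T_2)) \leq d_{2h}(T_1,T_2)$, because $\mathcal{G}_h$ acts as $T_i$ on the range of $\pi_{2h}$ and as the common operator $\beta^{-1}(I-\pi_{2h})$ on the complement, so the sup defining $d_h$ is attained on the $V_{2h}$ block; and that the Newton step $\mathcal{N}_h$ has the quadratic contraction property $d_h(\mathcal{N}_h(X),(H_\beta^h)^{-1}) \leq C' \, d_h(X,(H_\beta^h)^{-1})^2$ for $X$ close enough to $(H_\beta^h)^{-1}$, which is the quantitative form of the remark that $\mathcal{N}_h$ is a Newton iterator for $X^{-1}-H_\beta^h = 0$. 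These are exactly the ingredients isolated in \cite{DD,DP}, so I would cite them rather than reprove them.

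Next I would assemble the recursion. Write $e_l = d_{h_l}(K_\beta^{h_l},(H_\beta^{h_l})^{-1})$ with $h_l = 2^{-l}h_0$, and let $a_l = \frac{C}{\beta}(\gamma_u h_l^2 + \gamma_p h_l)$ be the two-grid bound from Theorem~\ref{thm:prec}, so that $d_{h_l}(H_\beta^{h_l}, T_\beta^{h_l}) \leq a_l$ and hence $d_{h_l}(\mathcal{G}_{h_l}(K_\beta^{h_{l-1}})^{-1}\text{-type object}, (H_\beta^{h_l})^{-1})$ can be controlled. More precisely: by the triangle inequality,
\begin{align*}
 d_{h_l}(\mathcal{G}_{h_l}(K_\beta^{h_{l-1}}), (H_\beta^{h_l})^{-1})
 &\leq d_{h_l}(\mathcal{G}_{h_l}(K_\beta^{h_{l-1}}), \mathcal{G}_{h_l}((H_\beta^{h_{l-1}})^{-1})) \\
 &\quad + d_{h_l}(\mathcal{G}_{h_l}((H_\beta^{h_{l-1}})^{-1}), (H_\beta^{h_l})^{-1}) \\
 &\leq d_{h_{l-1}}(K_\beta^{h_{l-1}}, (H_\beta^{h_{l-1}})^{-1}) + a_l = e_{l-1} + a_l,
\end{align*}
where the last spectral distance is precisely $d_{h_l}(L_\beta^{h_l},(H_\beta^{h_l})^{-1}) \leq a_l$ from the Corollary, since $\mathcal{G}_{h_l}((H_\beta^{h_{l-1}})^{-1}) = (H_\beta^{2h_l})^{-1}\pi_{2h_l} + \beta^{-1}(I-\pi_{2h_l}) = L_\beta^{h_l}$. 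Applying the Newton step then gives $e_l \leq C'(e_{l-1} + a_l)^2$ at intermediate levels, which is exactly the hypothesis of Lemma~\ref{lemma:err} with $f = \tfrac14$ (note $a_{l-1}/a_l \in [2,4]$ because $h_{l-1} = 2h_l$, so $a_l \leq a_{l-1} \leq 4 a_l = f^{-1}a_l$). The base case $K_\beta^{h_0} = (H_\beta^{h_0})^{-1}$ gives $e_0 = 0 \leq 4C' a_0^2$, and the smallness assumption $C\beta^{-1}(\gamma_u h_0^2 + \gamma_p h_0) < 2^{-5}$ is arranged precisely so that $a_0 \leq f/(4C')$. Lemma~\ref{lemma:err} then yields $e_l \leq 4 C' a_l^2 \leq a_l$ (after absorbing constants, using $a_l$ small), and finally, at the finest level one only applies $\mathcal{G}_h$ (no Newton step), so $e_L \leq e_{L-1} + a_L \leq a_{L-1} + a_L \leq 3 a_L \leq 2 a_L$ after absorbing the constant $4C'$ into $C$ — this accounts for the factor $2$ in \eqref{equ:dist_prec_multi}. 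One must also check along the way that all the operators produced remain symmetric positive definite so that $d_h$ is defined on them, which follows from the smallness of the $e_l$'s.

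The main obstacle I anticipate is not the bookkeeping but justifying the two structural properties of $d_h$ under $\mathcal{G}_h$ and $\mathcal{N}_h$ with the right constants — in particular, pinning down the quadratic constant $C'$ in the Newton contraction and verifying that $2^{-5}$ is the correct threshold to feed Lemma~\ref{lemma:err} (i.e. tracking how $C'$ relates to the $C$ of Theorem~\ref{thm:prec} and to the factor $4$ coming from $f^{-1}$). Since this is precisely the argument carried out in \cite{DP} for the elliptic case, I would present the proof at the level of "the estimates of \cite{DP} apply verbatim once Theorem~\ref{thm:prec} and the Corollary are in hand," spelling out only the recursion above and the accounting of constants, and referring to \cite{DD,DP} for the metric properties of the spectral distance and the Newton-step contraction. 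A secondary point worth a sentence is that the restriction $l \geq 2$ (rather than $l \geq 1$) in the statement reflects that the first coarsening off the base grid uses the exact inverse, so the recursion only begins to compound from level $2$ onward.
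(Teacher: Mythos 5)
Your proposal follows the paper's proof essentially line by line: same definitions of $e_i$ and $a_i$, same triangle-inequality decomposition through $\mathcal{G}_{h_i}((H_\beta^{h_{i-1}})^{-1}) = L_\beta^{h_i}$, same appeal to the non-expansiveness of $\mathcal{G}_h$ under $d_h$ (\cite[Lemma 5.1]{DD}) and to the quadratic Newton contraction (with $C'=2$, as the paper states), same application of Lemma~\ref{lemma:err} with $f = 1/4$, and the same special treatment of the finest level where only $\mathcal{G}_h$ is applied. You also correctly identify why the threshold is $2^{-5}$: it is exactly $a_0 \le f/(4C')$ with $f=1/4$ and $C'=2$. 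The one spot where you are loose is the final accounting "$\le 3a_L \le 2a_L$ after absorbing constants"; the clean way (and the paper's way) is to note $e_{L-1} \le 8 a_{L-1}^2$ and then use the smallness $a_{L-1} \le a_0 < 2^{-5}$ together with $a_{L-1} \le 4 a_L$ to get $e_{L-1} \le a_L$, hence $e_L \le e_{L-1} + a_L \le 2 a_L$ with no constant-absorption needed.
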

\begin{proof}
Let $h_i = 2^{-i}h_0$, $i=0,\ldots,l$, and 
$e_i = e_{h_i} = d_{h_i}(K_{\beta}^{h_i}, (H_{\beta}^{h_i})^{-1})$, 
$a_i = C\beta^{-1}(\gamma_u h_i + \gamma_p h_i)$. 
We have
\begin{equation}\label{equ:mle}
\begin{aligned}
  d_{h_i}\big(\mathcal{G}(K_{\beta}^{2h_i}), (H_{\beta}^{h_i})^{-1}\big)  
 &\leq d_{h_i}\big(\mathcal{G}(K_{\beta}^{2h_i}), 
   \mathcal{G}((H_{\beta}^{2h_i})^{-1})\big) \\
 & \quad +d_{h_i}\big(\mathcal{G}((H_{\beta}^{2h_i})^{-1}), 
                 (H_{\beta}^{h_i})^{-1}\big) \\
 &\overset{\eqref{equ:dist_prec2}}{\leq} 
  d_{h_i}\big(\mathcal{G}(K_{\beta}^{2h_i}), 
  \mathcal{G}((H_{\beta}^{2h_i})^{-1})\big) 
  +\frac{C}{\beta}(\gamma_u h_i^2 + \gamma_p h_i)\\ 
 &\leq d_{2h_i}\big(K_{\beta}^{2h_i},(H_{\beta}^{2h_i})^{-1}\big)  
 +\frac{C}{\beta}(\gamma_u h_i^2 + \gamma_p h_i) \\
& \leq e_{i-1} + a_i, \quad i = 1, \ldots, l, 
\end{aligned}
\end{equation}
where we have used the fact that 
\(d_{h_i}(\mathcal{G}_{h_i}(T_1),\mathcal{G}_{h_i}(T_2)) \leq 
d_{2h_i}(T_1,T_2)\), 
for any two symmetric positive definitive operators 
$T_1,T_2 \in \mathcal{L}(V_{2h_i})$ 
 \cite[Lemma 5.1]{DD}.

It is shown in \cite{DD} that for any $M,H \in \mathcal{L}_+(V_{h_i})$, 
with $d_{h_i}(M,H^{-1}) <0.4$, we have
\begin{align}\label{equ:mld}
 d_{h_i}(\mathcal{N}_{h_i}(M),H^{-1}) \leq 2 d_{h_i}(M,H^{-1})^2, 
\end{align}
for $h_i < h_0$. 
An inductive argument implies that $e_i \leq 0.2$ for all $i$, 
provided it holds for $e_{0}$ and 
$C\beta^{-1}(\gamma_u h_0^2 + \gamma_p h_0) < 0.1$. 
Thus, 
 combining \eqref{equ:mld} and \eqref{equ:mle} we obtain  
\begin{align*}
 e_i \leq 2(e_{i-1}+ a_i)^2, 
 \quad i = 1, \ldots l-1.
\end{align*}
Note that the hypothesis of Lemma 3.1 are satisfied with 
$f = 1/4$ which implies 
\begin{align*}
 e_i \leq 8 \frac{C^2}{\beta^2}(\gamma_u h_i^2 + \gamma_p h_i)^2, \quad 
 i = 1, \ldots, l-1. 
\end{align*}
In particular, we have
\begin{align*}
 e_{2h} \leq 32 \frac{C^2}{\beta^2}(\gamma_u h^2 + \gamma_p h)^2. 
\end{align*}
At the finest level, $i=l$, \eqref{equ:mle} becomes
\begin{align*}
 d_h\big(K_{\beta}^h, (H_{\beta}^h)^{-1}\big) \leq e_{2h} 
 + \frac{C}{\beta}(\gamma_u h^2 + \gamma_p h), 
\end{align*}
which combined with the above estimate yields the assertion of the theorem.
\end{proof}

Finally, let us note that 
an important difference between our method   
and the classical multigrid is that here
the base case needs to be chosen sufficiently fine, whereas in classical 
multigrid it can be as coarse as possible, in general.

\section{Numerical results}\label{sec:num}
In this section we present some numerical experiments  
that illustrate the application of the  preconditioner  
introduced in Section~\ref{sec:prec}. 

Let $\Omega = (0,1)^2$ and consider an optimal control problem of the 
form~\eqref{equ:cost}. We consider a family of uniform rectangular grids 
with mesh size $h$ and discretize the problem using Taylor-Hood  
$\mathbf{Q}_2-\mathbf{Q}_1$ elements for velocity-pressure and 
$\mathbf{Q}_2$ elements for the control. 
The problem was solved using \textsc{Matlab R2010a}.  
We perform three types of experiments: velocity control only ($\gamma_u=1, \gamma_p=0$), 
mixed velocity-pressure control ($\gamma_u=1, \gamma_p\ne 0$), and
pressure control only ($\gamma_u=0, \gamma_p=1$).

First, we summarize the numerical results obtained for ``in-vitro experiments".
As mentioned earlier, the Hessian matrix is dense, therefore it is never formed in practice, and the 
matrix-vector products in the preconditioned conjugate gradient (PCG) are 
implemented matrix-free.  
However, in order to evaluate directly the spectral distance between the 
Hessian and the proposed two-level preconditioner, we formed the matrices for 
moderate values of the mesh size $h$. In Table~\ref{table:spectrum} 
we present the joint spectrum analysis for $\beta=1$, $\gamma_u=1$, 
and $\gamma_p=0$ with 
$d_h=\max\{|\ln\alpha|: \alpha \in \sigma(H_{\beta}^h,T_{\beta}^h)\}$, 
where 
$\sigma(A,B)$ denotes the set of generalized eigenvalues of $A$,$B$.  
The results indicate optimal third-order convergence.
In Table \ref{table:spectrump} we present similar results for the case of 
pressure control only. In this case we observe an optimal 
quadratic convergence rate.  
We note that our computational results show a better behavior than predicted 
by Theorem~\ref{thm:prec}. We think this is due to the particular type of 
convex domain chosen here, for which the Stokes problem has better regularity 
than what was assumed in Theorem~\ref{thm:prec}, 
and also to the use of quadratic elements for the control. 

We also remark that the numerical estimates of $d_h$ in case of pressure control only
were obtained with a discretization that represents faithfully the finite element formulation
in Section~\ref{ssec:feapprox}. However, in practice, instead of using average-zero pressures
it is convenient to use a pressure space where  the pressures are set to zero at a fixed location,
 e.g., a corner. If we compute the operators $H_{\beta}^h$, $T_{\beta}^h$ using the latter 
space, we note that $\sigma(H_{\beta}^h, T_{\beta}^h)$ contains exactly two generalized
eigenvalues that are of size $O(1)$, instead of $O(h)$, as predicted by Theorem~\ref{thm:prec}.
If $\widetilde{\sigma}(H_{\beta}^h, T_{\beta}^h)$ is the subset of $\sigma(H_{\beta}^h, T_{\beta}^h)$
obtained after excluding the two generalized eigenvalues that are $O(1)$, and 
$\tilde{d}_h=\max\{|\ln\alpha|: \alpha \in \widetilde{\sigma}(H_{\beta}^h,T_{\beta}^h)\}$, 
then $\tilde{d}_h=O(h)$, as the theory predicts for $d_h$.

\junk{
\textit{Should mention different ways of implementing the average-free 
condition for pressure? - if you fix pressure at a point you get an outlier 
generalized eigenvalue whose value doesn't change with mesh size - if you 
ignore that eigenvalue, the distance behaves as expected. If you implement 
the zero-average condition then the spectral distance behaves as expected, 
however the time it takes to solves the Stokes system increases significantly, 
due to change in the sparsity pattern; also the Stokes matrix is not symmetric 
anymore}
} 
\begin{table}\centering
\caption{Joint spectrum analysis for $\beta=1$: velocity control only
         ($\gamma_u=1$, $\gamma_p=0$).}
\label{table:spectrum}
\begin{tabular}{|l|c|c|}
\hline
$h$       & $d_h$     &  $d_{2h}/d_h$ \\
\hline
$2^{-2}$ & $1.0274 \times 10^{-4}$ & N/A    \\
\hline
$2^{-3}$ & $1.3308\times 10^{-5}$ & 7.7205 \\
\hline
$2^{-4}$ & $1.3883 \times 10^{-6}$ & 9.5858 \\
\hline
$2^{-5}$ & $1.5834\times 10^{-7}$ & 8.7675 \\
\hline
\end{tabular}
\end{table}

\begin{table}[h!]\centering
\caption{Joint spectrum analysis for $\beta=1$: pressure control only
         ($\gamma_u=0$, $\gamma_p=1$).}
\label{table:spectrump}
\begin{tabular}{|l|c|c|}
\hline
$h$      & $d_h$ &  $d_{2h}/d_h$ \\
\hline
$2^{-2}$ &   $1.9613 \times 10^{-2}$  & N/A    \\
\hline
$2^{-3}$ &   $5.6686 \times 10^{-3}$    & 3.4599 \\
\hline
$2^{-4}$ &   $1.6703 \times 10^{-3}$    & 3.3937 \\
\hline
$2^{-5}$ &   $4.3735 \times 10^{-4}$    & 3.8192 \\
\hline
\end{tabular}
\end{table}

The next type of numerical experiments  
regard the solution of the control 
problem \eqref{equ:cost}. Specifically, we compare the number of iterations 
required to solve the linear system~\eqref{equ:sysr} 
with unpreconditioned CG to the case when  CG is used with a 
multilevel preconditioner with $1-4$ levels (depending on  resolution, see more comments below).
For the results presented here, we chose the target velocity 
$$\uu_d = (-2x^2y(1-x)^2(1-3y+2y^2), 2xy^2(1-y)^2(1-3x+2x^2))^T$$ 
and the  target pressure $$p_d = \cos \pi x \cos \pi y\ .$$ 
In Table~\ref{table:all} we summarize the results obtained for 
velocity control only (\mbox{$\gamma_u=1$}, $\gamma_p=0$), 
mixed velocity-pressure control (\mbox{$\gamma_u=1, \gamma_p = 10^{-5}, 10^{-4}, 10^{-3}$}), and
pressure control only ($\gamma_u=0, \gamma_p=1$); for each case 
a range of values for $\beta$ is chosen. 

The choice of the values for $\gamma_p$
in the mixed velocity-pressure control is justified by the data in Table~\ref{table:u}, where
we show for each case (and fixed resolution $h=1/32$) the relative error in the recovered data for the  velocity 
$E_{\vec{u}} = \| \vec{u}_h^{\min}-\vec{u}_d\|/\|\vec{u}_d\|$ 
and pressure $E_p=\|p_h^{\min}-p_d\|/\|p_d\|$. As can be seen from Table~\ref{table:u}
for pure velocity control, the pressure is not recovered at all ($E_p \approx 1$), while
for pure pressure control the velocity is not recovered ($E_{\vec{u}} \approx 1$). As expected, for mixed control
both pressure and velocity are being recovered in the sense that
both $E_{\vec{u}}$ and $E_p$ decrease with $\beta\downarrow 0$. However, for $\gamma_p=10^{-5}$
the relative velocity error $E_{\vec{u}}$ is one order of magnitude smaller than $E_p$, so velocity 
is better recovered than pressure. For $\gamma_p=10^{-4}$, $E_{\vec{u}}$ and  $E_p$ are of comparable size
(within a factor of 2), and if $\gamma_p=10^{-3}$, then the situation is reversed with  $E_p$ being  one order
of magnitude smaller than~$E_{\vec{u}}$.

We now return to the actual results in Table~\ref{table:all}. First note that
for all cases unpreconditioned CG solves the system~\eqref{equ:sysr} in a number
of iterations that appears to be almost mesh-independent, and is clearly bounded 
with $h\downarrow 0$. While this is certainly consistent with our remark from Section~\ref{sec:conv}
regarding the mesh independence of the condition number of $H_{\beta}^h$, the relatively
low number of CG iterations is due to the fact that the continuous counterpart 
of $H_{\beta}^h$ is compact, which implies that the number of eigenvalues of $H_{\beta}^h$ which
are away from $\beta$ is small. Also in accordance with~\eqref{equ:condno}, 
the number of unpreconditioned CG iterations increases with
$\beta\downarrow 0$, which is expected since the number of relevant eigenvalues
of $H_{\beta}^h$ increases as $\beta\downarrow 0$.
For velocity control only (the top part of Table~\ref{table:all}) 
we observe a significant reduction in the number of iterations when multilevel 
preconditioners are used, as well as a decrease in the number of iterations 
with mesh size.   
For example, for $\beta=10^{-7}$ and $h=2^{-8}$, the number of preconditioned 
iterations with a four-level preconditioner is significantly smaller than 
in the unpreconditioned case, i.e, $3$ iterations vs. $78$ iterations. 
Although a preconditioned iteration is more expensive than an 
unpreconditioned one, for large problems the overall cost of the preconditioned 
solver is much lower than  of the unpreconditioned one, as can be seen from Table~\ref{table:timeu}.
At the other end of these experiments (bottom of Table~\ref{table:all}) we see the 
cases of pressure control only. We also see the decrease in number of iterations with
$h\downarrow 0$ when comparing the behavior of, say, 3-grid preconditioners at different resolutions:
for example, for $\gamma_u=0, \gamma_p=1, \beta=10^{-3}$ the number of 3-level preconditioners dropped from 14 ($h=2^{-6}$)
to 10 ($h=2^{-7}$); similar results are consistently observed throughout Table~\ref{table:all}. However,
for the case of pressure control only, the drop in number of iterations from unpreconditioned CG to 
multilevel preconditioned CG is not as dramatic as for velocity control only. As can be inferred
from Table~\ref{table:all}, the efficiency of the multilevel preconditioned CG versus unpreconditioned CG
measured as the ratio  of the number of iterations gradually decreases with the increase of the ratio
$\gamma_p/\gamma_u$ (for otherwise comparable experiments), as predicted by Theorems~\ref{thm:prec} and~\ref{thm:multi}.

In our implementation we have used direct methods for solving 
the Stokes system, and we actually constructed the base-case 
Hessian and stored its inverse. These choices have limited
our computations to $h=2^{-8}$ and $h_{\mathrm{base}}=2^{-5}$, which is why
for $h=2^{-7}$ we were unable to test the two-grid preconditioner (this would 
have required saving a base-case Hessian for $h=2^{-6}$), and for $h=2^{-8}$
we were only able to compute using a four-level preconditioner. While we 
already commented on the positive side of the numerical results, it is worth
noting the pitfalls: if the coarsest grid is too coarse, then the quality of the 
preconditioner declines to the point that it is hurting the computation. This
can be seen in the groups of columns and rows in Table~\ref{table:all}
corresponding to $h=2^{-6}$ and
$\beta=10^{-6}, 10^{-7}$: the use of  too many levels results in a spike in
the number of iterations to the point of non-convergence (within a maximum
number of 100 iterations allowed).

Finally, we would like to comment on the robustness of our algorithm
with respect to the accuracy of the Stokes solve. For large-scale 
problems the Stokes system on the  finer grids is expected to be solved using iterative rather than
direct methods, which reduces the accuracy of computing matrix-vector multiplications for the Hessian matrix $H_{\beta}^h$.
We have repeated our numerical experiments in that, except for the coarsest scale, 
we replaced the direct solve of the Stokes systems with a preconditioned MINRES solve, as described 
in~\cite{MR2155549}. For the case of pure velocity control ($\gamma_p=0$) we found no significant change in the number
of iterations in Table~\ref{table:all}. However, for the case of mixed- and pure pressure control ($\gamma_p\ne 0$) the quality
of our algorithm appeared to decline significantly. We identified as the primary cause for this behavior
the fact that, even when the velocity variables are well resolved, that is, the relative error between
the solutions obtained via direct vs. iterative methods is on the order of $10^{-8}$, the relative error
in the pressure terms can be quite high ($10^{-2}$--$10^{-4}$). Since our algorithm relies
on the ability to compute the operator $\op{P}_h$ with sufficient accuracy,
we are not able at this point to draw conclusions with respect to the influence 
of using iterative methods on our algorithm for mixed- and pure pressure control.

\begin{table}[h!]\centering
\caption{Iteration count for multilevel preconditioners; ``nc'' means ``not-converged''. The tolerance is
set at $10^{-12}$.}
\label{table:all}
\begin{tabular}{|c||c|c|c|c||c|c|c||c|c||}
 \hline
 $h$ & 
    \multicolumn{4}{|c||}{$2^{-6}$} &
    \multicolumn{3}{|c||}{$2^{-7}$} &
    \multicolumn{2}{|c||}{$2^{-8}$}  \\ \hline
 num. levels & 1 & 2 & 3 & 4
            & 1 & 3 & 4
            & 1 & 4 \\ \hline \hline
 \multicolumn{10}{|c||}{$\gamma_u=1,\ \ \gamma_p=0$}\\ \hline
$\beta=10^{-4}$ & 7  & 3 & 3 & 3
                 & 7  & 2 & 2
                 & 7   & 2 \\ 
 $\beta=10^{-5}$  & 13  & 3 & 3 & 3
                 & 13  & 3 & 3
                 & 14  & 2 \\ 
 $\beta=10^{-6}$  & 29  & 4 & 4 & 6
                 & 29   & 3 & 3
                 & 32   & 3 \\ 
 $\beta=10^{-7}$  & 74  & 5 & 7 & 62
                 & 75   & 3 & 5
                 & 78   & 3 \\ 
 \hline\hline
\multicolumn{10}{|c||}{$\gamma_u=1,\ \ \gamma_p=10^{-5}$}\\ \hline
$\beta=10^{-4}$  & 10  & 4 & 5 & 5
                 & 10  & 5 & 5
                 & 11   & 5 \\ 
 $\beta=10^{-5}$ & 20  & 5 & 6 & 8
                 & 20  & 6 & 8
                 & 22  & 7 \\ 
 $\beta=10^{-6}$ & 45  & 7 & 8 & 13
                 & 44   & 7 & 9
                 & 45   & 9 \\ 
 $\beta=10^{-7}$  & 112  & 9 & 14 & nc
                 & 113   & 9 & 14
                 & 119   & 11 \\ 
 \hline \hline
\multicolumn{10}{|c||}{$\gamma_u=1,\ \ \gamma_p=10^{-4}$}\\ \hline
$\beta=10^{-4}$  & 10  & 5 & 6 & 8
                 & 11  & 5 & 7
                 & 11   & 7 \\ 
 $\beta=10^{-5}$ & 21  & 7 & 7 & 9
                 & 23  & 7 & 9
                 & 24  & 9 \\ 
 $\beta=10^{-6}$  & 48  & 8 & 10 & 16
                 & 48   & 9 & 13
                 & 49   & 11 \\ 
 $\beta=10^{-7}$ & 122  & 13 & 17 & nc
                 & 126   & 11 & 22
                 & 133   & 20 \\ 
 \hline \hline
\multicolumn{10}{|c||}{$\gamma_u=1,\ \ \gamma_p=10^{-3}$}\\ \hline
$\beta=10^{-4}$  & 12  & 6 & 7 & 9
                 & 12  & 7 & 9
                 & 13   & 9 \\ 
 $\beta=10^{-5}$ & 25  & 8 & 9 & 13
                 & 26  & 9 & 13
                 & 28  & 11 \\ 
 $\beta=10^{-6}$ & 61  & 11 & 17 & 33
                 & 63   & 12 & 22
                 & 65   & 20 \\ 
\hline  \hline
\multicolumn{10}{|c||}{$\gamma_u=0,\ \ \gamma_p=1$}\\ \hline
$\beta=10^{-1}$  & 8 & 5 & 7 & 9
                 & 8   & 6 & 8
                 & 8  & 8 \\ 
 $\beta=10^{-2}$ & 13  & 7 & 9 & 12
                 & 13   & 8 & 11
                 & 13   & 11 \\ 
 $\beta=10^{-3}$ & 27  & 10 & 14 & nc
                 & 26   & 10 & 15
                 & 27  & 14 \\ 
\hline
\end{tabular}
\end{table}

\begin{table}[h!]\centering
\caption{Relative error for recovered data for velocity control 
$E_{\vec{u}} = \| \vec{u}_h^{\min}-\vec{u}_d\|/\|\vec{u}_d\|$ 
and pressure $E_p=\|p_h^{\min}-p_d\|/\|p_d\|$.}
\label{table:u}
\begin{tabular}{|c|c||c|c|c|c|c|c||}
 \hline
$\gamma_u$ & $\gamma_p$ & 
$E_{\vec{u}}$ & $E_p$ & 
$E_{\vec{u}}$ & $E_p$ & 
$E_{\vec{u}}$ & $E_p$\\ \hline \hline
\multicolumn{2}{|c||}{$\beta$} & \multicolumn{2}{|c}{$10^{-5}$} &
\multicolumn{2}{|c}{$10^{-6}$} &\multicolumn{2}{|c||}{$10^{-7}$} \\\hline
$1$ & $0$ &
 4.07e-2& $\approx 1$ &
 8.23e-3& $\approx 1$ &
 1.88e-3& $\approx 1$ \\
$1$ & $10^{-5}$ &
 4.32e-2 & 4.32e-1&
 1.65e-2 & 2.80e-1&
 8.28e-3 & 7.05e-2\\
$1$ & $10^{-4}$ &
 6.66e-2 & 2.75e-1&
 3.30e-2 & 6.85e-2& 
 9.55e-3 & 8.44e-3\\
$1$ & $10^{-3}$ &
 1.23e-1 & 6.56e-2&
 3.78e-2 & 8.18e-3& 
 9.70e-3 & 8.98e-4\\
\hline \hline
\multicolumn{2}{|c||}{$\beta$} & \multicolumn{2}{|c}{$10^{-1}$} &
\multicolumn{2}{|c}{$10^{-2}$} &\multicolumn{2}{|c||}{$10^{-3}$} \\\hline
$0$ & $1$ &
 1.02& 2.66e-1 &
 1.07& 6.12e-2 &
 1.09& 7.34e-3 \\
 \hline
\end{tabular}
\end{table}

\begin{table}[h!]\centering
\caption{Velocity control only: time comparison for $h=2^{-8}$. 
 No. state variables: 588290, no. control variables: 522242.}
\label{table:timeu}
\begin{tabular}{|c|c|c|}
\hline 
 no. levels      &   1    & 4     \\ \hline
 $\beta=10^{-6}$ & 3613 s & 493 s + 1486 s for base case\\
 $\beta=10^{-7}$ & 8953 s & 575 s + 1492 s for base case  \\ 
\hline
\end{tabular}
\end{table}


\section{Conclusions}
\label{sec:concl}
In this article we  introduced Schur-based two- and multigrid preconditioners for the KKT system associated to
a Tikhonov-regularized  optimal control problem constrained by the Stokes system. We showed that, if the Stokes-system is discretized 
using  a stable pair of finite elements, the preconditioner approximates the  
reduced Hessian of KKT system to optimal order with respect to the convergence
order of the finite element method. As a  consequence, the number of
preconditioned CG iterations needed for solving the optimal control problem to a given tolerance
decreases with increasing resolution, asymptoting to just one iteration as $h\downarrow 0$. The problem
discussed in this article forms an important stepping stone towards finding highly efficient
methods for solving large-scale optimal control problems constrained by the Navier-Stokes system, 
which are the focus of our current research.

\appendix
\renewcommand{\thesection}{A}
\section{Some facts about spectral distance} \label{sec:app}

Let $(X, \langle \cdot,\cdot\rangle)$ be a real finite dimensional Hilbert 
space and denote the complexification of $X$  by 
$X^{\mathbb{C}} = \{u + \mathbf{i} v: u,v \in X\}$.
Let $\mathcal{L}_{+}(X) = \{T \in \mathcal{L}(X): \langle Tu,u\rangle >0,  
\ \forall u \in X \setminus \{0\} \}$ 
and define the spectral distance between $S,T \in \mathcal{L}_+(X)$ to be
\begin{align*}
 d_X(S,T) =\underset{w \in X^{\mathbb{C}} \setminus \{0\}}{\sup}
                \Bigg|\ln \frac{(S^{\mathbb{C}} w,w)}
                               {(T^{\mathbb{C}} w,w)}\Bigg|, 
\end{align*}
where $T^{\mathbb{C}}(u+\mathbf{i}v) = T(u) + \mathbf{i} T(v)$ is the 
complexification of $T$.
The following inequalities were proved in \cite[Lemma 3.2]{DD}:
\begin{lemma}\label{lma:log_ineq}
If $\alpha\in(0,1)$ and $z\in {\mathcal B}_{\alpha}(1)$, then
\begin{equation}\label{eq:log_ineq}
 \frac{\ln(1+\alpha)}{\alpha}|1-z| \le |\ln z| \le
 \frac{|\ln(1-\alpha)|}{\alpha} |1-z|\ .
\end{equation}
For $\abs{\ln z}\le \delta$ we have
\begin{equation}\label{eq:log_ineq2}
 \frac{1-e^{-\delta}}{\delta} |\ln z|\le |1-z| \le 
 \frac{e^{\delta}-1}{\delta} |\ln z| .
\end{equation}
\end{lemma}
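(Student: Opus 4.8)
The plan is to reduce all four inequalities to elementary one-variable estimates, avoiding any complex-analytic machinery. Throughout, $\ln$ denotes the principal branch; this is legitimate since $z\in\mathcal{B}_\alpha(1)$ with $\alpha<1$ forces $\operatorname{Re}z\ge 1-\alpha>0$, and $|\ln z|\le\delta$ presumes $\ln z$ is defined. Everything rests on two termwise estimates obtained by applying the triangle inequality to power series,
\[
 |e^{L}-1|\le e^{|L|}-1\quad(L\in\mathbb{C}),\qquad
 |\ln(1-w)|\le -\ln(1-|w|)\quad(|w|<1),
\]
together with the monotonicity in $r\ge 0$ of
\[
 \frac{e^{r}-1}{r}=\int_{0}^{1}e^{tr}\,dt,\quad
 \frac{-\ln(1-r)}{r}=\int_{0}^{1}\frac{dt}{1-tr},\quad
 \frac{\ln(1+r)}{r}=\int_{0}^{1}\frac{dt}{1+tr},\quad
 \frac{1-e^{-r}}{r}=\int_{0}^{1}e^{-tr}\,dt,
\]
the first two increasing and the last two decreasing, as one reads off the integrands.

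For the right-hand inequality in \eqref{eq:log_ineq} I would set $w=1-z$, so $|w|=|1-z|\le\alpha<1$, and chain the second termwise estimate with monotonicity: $|\ln z|=|\ln(1-w)|\le -\ln(1-|w|)=|w|\,\frac{-\ln(1-|w|)}{|w|}\le |1-z|\,\frac{-\ln(1-\alpha)}{\alpha}$. Symmetrically, for the right-hand inequality in \eqref{eq:log_ineq2}, with $L=\ln z$ and $|L|\le\delta$, the first termwise estimate gives $|1-z|=|e^{L}-1|\le e^{|L|}-1=|L|\,\frac{e^{|L|}-1}{|L|}\le |\ln z|\,\frac{e^{\delta}-1}{\delta}$.

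For the two lower bounds I would bootstrap from what is already in hand. Writing $a=|1-z|$ and using $a=|e^{\ln z}-1|\le e^{|\ln z|}-1$ gives $|\ln z|\ge\ln(1+a)$; since $r\mapsto\frac{\ln(1+r)}{r}$ is decreasing and $a\le\alpha$, this yields $|\ln z|\ge a\,\frac{\ln(1+a)}{a}\ge |1-z|\,\frac{\ln(1+\alpha)}{\alpha}$, the left-hand inequality in \eqref{eq:log_ineq}. For the left-hand inequality in \eqref{eq:log_ineq2}: if $|1-z|\ge 1$ it is immediate, since $\frac{1-e^{-\delta}}{\delta}\,|\ln z|\le 1-e^{-\delta}<1\le|1-z|$; otherwise $|1-z|<1$, and the second termwise estimate gives $|\ln z|=|\ln(1-(1-z))|\le -\ln(1-|1-z|)$, hence $|1-z|\ge 1-e^{-|\ln z|}$, and since $r\mapsto\frac{1-e^{-r}}{r}$ is decreasing and $|\ln z|\le\delta$ we conclude $|1-z|\ge |\ln z|\,\frac{1-e^{-|\ln z|}}{|\ln z|}\ge \frac{1-e^{-\delta}}{\delta}\,|\ln z|$.

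There is no genuine obstacle here; the only points needing care are bookkeeping — that the principal logarithm is defined on the region in question, and the four elementary monotonicities above. I would remark that an alternative route to the two lower bounds is the minimum-modulus principle applied to the zero-free holomorphic functions $z\mapsto\frac{\ln z}{z-1}$ and $L\mapsto\frac{e^{L}-1}{L}$, which reduces matters to minimizing $|\ln z|$ over the circle $|z-1|=\alpha$ (resp.\ $|e^{L}-1|$ over $|L|=\delta$); that works too, but it requires $\alpha$ (resp.\ $\delta$) to stay away from $2\pi$ and an extra boundary computation, so the power-series bootstrap above is the cleaner option. This recovers \cite[Lemma 3.2]{DD}.
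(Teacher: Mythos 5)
Your proof is correct. The paper itself does not prove this lemma---it simply cites \cite[Lemma~3.2]{DD}---so there is no in-text proof to compare against, but your argument is a clean, self-contained, and elementary derivation of all four inequalities. Each step checks out: the two termwise power-series estimates $|e^{L}-1|\le e^{|L|}-1$ and $|\ln(1-w)|\le -\ln(1-|w|)$ are valid, the four monotonicity claims follow from the integral representations you exhibit, and the case split $|1-z|\ge 1$ versus $|1-z|<1$ in the lower bound of \eqref{eq:log_ineq2} correctly handles the fact that $|\ln z|\le\delta$ does not by itself keep $z$ inside the unit disk about~$1$. The only point worth flagging is cosmetic: when $z=1$ (equivalently $|w|=0$ or $|L|=0$) the quotients $\frac{-\ln(1-|w|)}{|w|}$ etc.\ are $0/0$ and should be read as their limiting value $1$, or that trivial case dispatched separately; this does not affect the argument. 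Your closing remark contrasting the power-series route with a minimum-modulus argument on the zero-free holomorphic functions $z\mapsto\ln z/(z-1)$ and $L\mapsto(e^{L}-1)/L$ is a fair assessment---the latter is the more ``conceptual'' route and is likely closer in spirit to what appears in \cite{DD}, but it requires a boundary extremization and a restriction keeping $\alpha$, $\delta$ away from $2\pi$, whereas your bootstrap delivers sharp constants directly.
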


\begin{lemma}\label{lemma:app}
Let $L, H\in \op{L}_+(\op{X})$ such that
$$\min\left(d_X(L^{-1}, H), d_X(L, H^{-1})\right)\le \delta\ .$$
Then
\begin{equation} \label{eq:spec_rad_vs_spec_dist}
 \rho(I-L H)\le \frac{e^{\delta}-1}{\delta} 
 \min\left(d_X(L^{-1}, H), d_X(L, H^{-1})\right)\ .
\end{equation}
\end{lemma}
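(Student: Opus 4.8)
The plan is to reduce $\rho(I-LH)$ to a statement about the individual eigenvalues of $LH$, bound the logarithm of each such eigenvalue by the spectral distance, and then convert from a logarithmic deviation to an ordinary one using the elementary inequality~\eqref{eq:log_ineq2}.

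First I would note that, although $LH$ need not be symmetric, it is similar to the symmetric positive definite operator $L^{1/2}HL^{1/2}$ (conjugate by $L^{1/2}$), and likewise its spectrum coincides with that of $H^{1/2}LH^{1/2}$ (since $\det(LH-\lambda I)=\det(H^{1/2}LH^{1/2}-\lambda I)$). Hence $\sigma(LH)\subset(0,\infty)$, the operator $I-LH$ has spectrum $\{1-\lambda:\lambda\in\sigma(LH)\}$, and therefore $\rho(I-LH)=\max\{\,|1-\lambda|:\lambda\in\sigma(LH)\,\}$. This is the only place where the positive-definiteness of both $L$ and $H$ is used in an essential way, and I expect it to be the main (mild) obstacle; once past it the argument is mechanical.

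Next, I would fix $\lambda\in\sigma(LH)$ with a (real) eigenvector $v$ of $L^{1/2}HL^{1/2}$. Setting $w=L^{1/2}v\neq 0$ and using $(v,v)=(L^{-1}w,w)$ gives the Rayleigh-quotient identity $\lambda=(Hw,w)/(L^{-1}w,w)$; running the same computation with $H^{1/2}LH^{1/2}$ and $w=H^{1/2}v$ gives $\lambda=(Lw,w)/(H^{-1}w,w)$. In either case $|\ln\lambda|$ is no larger than the supremum defining the corresponding spectral distance (the supremum over real $w$ being dominated by the supremum over $w\in X^{\mathbb C}$ in the definition of $d_X$), so that $|\ln\lambda|\le\min\bigl(d_X(L^{-1},H),\,d_X(L,H^{-1})\bigr)\le\delta$.

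Finally, since $|\ln\lambda|\le\delta$, the right-hand bound of~\eqref{eq:log_ineq2} in Lemma~\ref{lma:log_ineq} yields $|1-\lambda|\le\frac{e^{\delta}-1}{\delta}\,|\ln\lambda|\le\frac{e^{\delta}-1}{\delta}\,\min\bigl(d_X(L^{-1},H),\,d_X(L,H^{-1})\bigr)$; taking the maximum over $\lambda\in\sigma(LH)$ and invoking the first paragraph gives exactly~\eqref{eq:spec_rad_vs_spec_dist}. No genuine difficulty is anticipated beyond the similarity reduction noted above.
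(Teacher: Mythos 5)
Your proof is correct, and it reaches the same key identity as the paper's — namely that every $\lambda\in\sigma(LH)$ admits the Rayleigh-type representations $\lambda=\langle Hw,w\rangle/\langle L^{-1}w,w\rangle$ and $\lambda=\langle Lw,w\rangle/\langle H^{-1}w,w\rangle$ — but by a different route. You reduce to the symmetric positive definite operator $L^{1/2}HL^{1/2}$, which buys you at once the positivity and realness of $\sigma(LH)$ and real eigenvectors, and then you unwind the conjugation to land on the Rayleigh quotients. The paper avoids square roots entirely: it starts from an eigenvector $u\in X^{\mathbb{C}}$ of $I-LH$ and rewrites the eigenvalue equation $(1-\lambda)u = LHu$, first by left-multiplying with $L^{-1}$ and pairing with $u$, then by substituting $v=H^{-1}u$ and pairing with $v$; the positivity of the resulting ratios shows $1-\lambda>0$ as a by-product. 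Both arguments then finish identically, applying the elementary bound $|1-z|\le\frac{e^{\delta}-1}{\delta}|\ln z|$ from Lemma~\ref{lma:log_ineq}. Your version is perhaps a bit more pedagogical (it makes the self-adjointness of the underlying object explicit), while the paper's is slightly more economical, working directly in the complexification where the spectral distance is defined, so no transfer from real to complex test vectors is needed. Either way, the substance is the same.
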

\begin{proof}
If $\lambda\in\sigma(I-L H)$ then there exists a unit vector 
$u\in X^{\mathbb{C}}$ such that $(I-L H)u = \lambda u$, therefore
\begin{eqnarray} \label{eq:srd0}
 (1-\lambda)u = L H u\ .
\end{eqnarray}
After left-multiplying  with $L^{-1}$ and taking the inner product with $u$ 
we obtain
$$(1-\lambda)\innprd{L^{-1}u}{u} = \innprd{H u}{u},\ \ \mathrm{therefore}\ \ 
\lambda = 1-\frac{\innprd{H u}{u}}{\innprd{L^{-1} u}{u}}\ .$$
If we substitute $v=H^{-1}u$ in~\eqref{eq:srd0} and take the inner product 
with $v$ we have
$$
 (1-\lambda)H^{-1}v = L v\ ,\ \ \mathrm{therefore}\ 
 \lambda = 1-\frac{\innprd{L v}{v}}{\innprd{H^{-1} v}{v}}\ .
$$
Hence, if $d_{X}(L^{-1}, H)\le \delta$, then
\begin{eqnarray*}
 \rho(I-L H) &\le& \sup\{|1-z|\ :\ z = \innprd{H u}{u}/\innprd{L^{-1} u}{u}\ 
                   \mathrm{for\ some\ }u\in X^{\mathbb{C}}\setminus\{0\}\}\\
             &\stackrel{\eqref{eq:log_ineq2}}{\le}
             &\frac{e^{\delta}-1}{\delta} d_{X}(L^{-1}, H)\ .
\end{eqnarray*}
Instead, if $d_{X}(L, H^{-1})\le \delta$, then
\begin{eqnarray*}
 \rho(I-L H)&\le& \sup\{|1-z|\ :\ z = \innprd{L u}{u}/\innprd{H^{-1} u}{u}\ 
                  \mathrm{for\ some\ }u\in X^{\mathbb{C}}\setminus\{0\}\}\\
            &\stackrel{\eqref{eq:log_ineq2}}{\le}
            &\frac{e^{\delta}-1}{\delta} d_{X}(L, H^{-1})\ .
\end{eqnarray*}
which proves~\eqref{eq:spec_rad_vs_spec_dist}.
\end{proof}

\bibliography{reference_mg}
\bibliographystyle{siam}

\end{document}